\newtheorem{thm}{Theorem}[section]
\newtheorem{cor}[thm]{Corollary}
\newtheorem{lemma}[thm]{Lemma}
\newtheorem{prop}[thm]{Proposition}
\newcommand{\R}{{\mathbb{R}}}
\newcommand{\Z}{{\mathbb{Z}}}
\newcommand{\4}{\widetilde}
\newcommand{\La}{\triangle}
\def\ni{\noindent}
\begin{document}
\title{Large time behaviour of higher dimensional logarithmic diffusion 
equation}
\author[Kin Ming Hui]{Kin Ming Hui}
\address{Kin Ming Hui:
Institute of Mathematics, Academia Sinica,
Taipei, 10617, Taiwan, R.O.C.}
\email{kmhui@gate.sinica.edu.tw}
\author[Sunghoon Kim]{Sunghoon Kim}
\address{Sunghoon Kim:
Institute of Mathematics, Academia Sinica,
Taipei, 10617, Taiwan, R.O.C.}
\email{gauss79@math.sinica.edu.tw}
\date{Nov 23, 2011}
\keywords{logarithmic diffusion equation, global solution,
asymptotic behaviour}
\subjclass{Primary 35B40 Secondary 35K57, 35K65}

\maketitle
\begin{abstract}
Let $n\ge 3$ and $\psi_{\lambda_0}$ be the radially symmetric solution of $\Delta
\log\psi+2\beta\psi+\beta x\cdot\nabla\psi=0$ in $\R^n$, $\psi(0)
=\lambda_0$, for some constants $\lambda_0>0$, $\beta>0$. Suppose 
$u_0\ge 0$ satisfies $u_0-\psi_{\lambda_0}\in L^1(\R^n)$ and $u_0(x)\approx 
\frac{2(n-2)}{\beta}\frac{\log |x|}{|x|^2}$ as $|x|\to\infty$. We prove 
that the rescaled solution $\4{u}(x,t)=e^{2\beta t}u(e^{\beta t}x,t)$ of the 
maximal global solution $u$ of the equation $u_t=\Delta\log u$ in $\R^n
\times (0,\infty)$, $u(x,0)=u_0(x)$ in $\R^n$, converges uniformly on every 
compact subset of $\R^n$ and in $L^1(\R^n)$ to $\psi_{\lambda_0}$ as 
$t\to\infty$. Moreover $\|\4{u}(\cdot,t)-\psi_{\lambda_0}\|_{L^1(\R^n)}
\le e^{-(n-2)\beta t}\|u_0-\psi_{\lambda_0}\|_{L^1(\R^n)}$ for all $t\ge 0$.
\end{abstract}
\vskip 0.2truein


\setcounter{equation}{0}
\setcounter{section}{0}

\section{Introduction}
\setcounter{equation}{0}
\setcounter{thm}{0}

In this paper we will study the asymptotic large time behaviour of the
solution of the equation
\begin{equation}\label{log-eqn}
\begin{cases}
u_t=\La\log u, u>0,\quad\mbox{ in }\R^n\times(0,\infty),\\
u(x,0)=u_0(x) \qquad\quad\mbox{ in }\R^n
\end{cases}
\end{equation}
for $n\ge 3$.
When $n=1$, P.L.~Lions and G.~Toscani have proved that \eqref{log-eqn}  arises
as the diffusive limit for finite velocity Boltzmann kinetic models \cite{LT}
and T.~Kurtz \cite{K} has showed that  \eqref{log-eqn} arises as the limiting 
distribution 
of two gases moving against each other and obeying the Boltzmann equation.  
When $n=2$, the above equation arises in the study of Ricci flow on the 
complete $\R^2$ \cite{W1}, \cite{W2}. \eqref{log-eqn} also arises as the
singular limit \cite{ERV}, \cite{H2}, as $m\to 0$ of the following class 
of degenerate parabolic equation,
\begin{equation}\label{porous-eqn}
\begin{cases}
u_t=\Delta(u^m/m)\quad\mbox{ in }\R^n\times(0,T),\\
u(x,0)=u_0(x) \quad \mbox{ in }\R^n.
\end{cases}
\end{equation}
It is known that \eqref{porous-eqn} arises in many physical models. For 
example when $m=1/2$, \eqref{porous-eqn} arises in the study of the diffusion 
of impurities in silicon \cite{Ki}. When $m>1$, \eqref{porous-eqn} arises in 
the study of gases through porous media \cite{A}, \cite{P}. Interested reader
can read the book \cite{DK} by P.~Daskalopoulos and C.E.~Kenig for the recent 
results on \eqref{log-eqn} and \eqref{porous-eqn}.

Existence of infinitely many finite mass solutions of \eqref{log-eqn} for 
$n=2$ and $0\le u_0\in L^1(\R^2)\cap L^p(\R^2)$ for some $p>1$ is proved by 
P.~Daskalopoulos and M.A.~del Pino \cite{DP1} and K.M.~Hui \cite{H1}.
Global existence and uniqueness of solutions of \eqref{log-eqn} for $n=2$ 
is proved by P.~Daskalopoulos and M.A.~del Pino \cite{DP1} and S.Y.~Hsu 
\cite{Hs1}. Global existence of solution of \eqref{log-eqn} for $n\ge 3$ 
is proved by P.~Daskalopoulos, M.A.~del Pino, and K.M.~Hui in \cite{DP2},
\cite{H3}. Large time behaviour of solution of
\eqref{log-eqn} for $n=2$ is proved by S.Y.~Hsu in \cite{Hs3}, \cite{Hs4}.

Extinction profile of solutions of \eqref{porous-eqn} for $0<m<(n-2)/n$
and $n\ge 3$ is studied by P.~Daskalopoulos and N.~Sesum in \cite{DS1}.
Extinction profile of maximal solutions of \eqref{log-eqn} in 
$\R^n\times (0,T)$ for $n=2$ near the extinction time $T>0$ is studied by
P.~Daskalopoulos, M.A.~del Pino, N.~Sesum and K.M.~Hui \cite{DP3}, 
\cite{DS2}, \cite{H4}.
Extinction profile of maximal solutions of \eqref{log-eqn} in 
$\R^n\times (0,T)$ for $n=3$ and $n\ge 5$ near the extinction time $T>0$ 
with initial value $u_0$ satisfying the condition
\begin{equation*}\label{u0-trapped-barenblat}
B_{k_1}(x,0)\le u_0(x)\leq B_{k_2}(x,0)
\end{equation*}
where
\begin{equation}\label{barenblett}
B_k(x,t)=\frac{2(n-2)(T-t)_+^{\frac{n}{n-2}}}{k+(T-t)_+^{\frac{2}{n-2}}|x|^2},
\qquad k>0,
\end{equation}
is the Barenblatt solution of \eqref{log-eqn} is studied by K.M.~Hui and 
S.~Kim in \cite{HK}. 

We will now assume that $n\ge 3$ and let $\beta>0$ be a fixed constant
for the rest of the paper. For any $\lambda>0$, let $\psi=\psi_{\lambda}$ 
be the radially symmetric solution of 
\begin{equation}\label{psi-eqn}
\begin{cases}
\La\log\psi+2\beta\psi+\beta x\cdot\nabla\psi=0,\quad \psi>0,\qquad 
\mbox{in }\R^n\\
\psi(0)=\lambda.
\end{cases}
\end{equation}
given by \cite{Hs4} and
\begin{equation}\label{self-similar-soln}
\phi=\phi_{\lambda}(x,t)=e^{-2\beta t}\psi_{\lambda}\left(e^{-\beta t}x\right).
\end{equation}
Whenever there is no ambiguity, we will drop the subscript $\lambda$ and write
$\psi$, $\phi$, instead of $\psi_{\lambda}$, $\phi_{\lambda}$. 
Then $\phi$ satisfies 
$$
\phi_{t}=\La\log\phi\quad\mbox{ in }\R^n\times (0,\infty).
$$
It was proved by S.Y.~Hsu in \cite{Hs4} that the radially symmetric 
solution $\psi$ of \eqref{psi-eqn} satisfies 
\begin{equation}\label{psi-decay-infty}
\lim_{r\to\infty}\frac{r^2\psi(r)}{\log r}=\frac{2(n-2)}{\beta}.
\end{equation}
A natural question to ask is that if the initial value $u_0$ has the same
decay rate at infinity as $\phi (x,0)=\psi(x)$ given by 
\eqref{self-similar-soln}, does the solution $u$ of \eqref{log-eqn} 
behaves like the function $\phi$ as $t\to\infty$. We answer this question 
in the affirmative in this paper. We prove that if the initial value 
$u_0$ satisfies
\begin{equation*}
u_0(x)\approx \frac{2(n-2)}{\beta}\frac{\log |x|}{|x|^2}\quad\mbox{ as }
|x|\to\infty
\end{equation*}
and $u$ is the global maximal solution of \eqref{log-eqn} with $n\ge 3$, then 
the rescaled function
\begin{equation}\label{rescale-soln}
\4{u}(x,t)=e^{2\beta t}u(e^{\beta t}x,t)
\end{equation}
converges uniformly on every compact subset of $\R^n$ to $\psi_{\lambda_0}$
as $t\to\infty$ for some constant $\lambda_0>0$. More precisely we prove 
the following main results of the paper.

\begin{thm}\label{main-thm}
Let $n\ge 3$ and $u_0$ satisfies
\begin{equation}\label{u0-trapped-between}
\psi_{\lambda_1}(x)\le u_0(x)\le\psi_{\lambda_2}(x)\quad\forall x\in\R^n
\end{equation}
and
\begin{equation}\label{u0-psi-L1}
u_0-\psi_{\lambda_0}\in L^1(\R^n)
\end{equation}
for some constants $\lambda_2>\lambda_1>0$ and $\lambda_0>0$. Suppose $u$ 
is the global maximal solution of \eqref{log-eqn} and $\4{u}$ is given by 
\eqref{rescale-soln}. Then $\4{u}$ converges uniformly on every compact 
subset of $\R^n$ and in $L^1(\R^n)$ to $\psi_{\lambda_0}$ as $t\to\infty$. 
Moreover
\begin{equation}\label{u-tilde-psi-L1-decay}
\|\4{u}(\cdot,t)-\psi_{\lambda_0}\|_{L^1(\R^n)}\le e^{-(n-2)\beta t}
\|u_0-\psi_{\lambda_0}\|_{L^1(\R^n)}\quad\forall t\ge 0.
\end{equation}
\end{thm}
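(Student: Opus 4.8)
The plan is to work directly with the rescaled function $\tilde{u}$ and derive the partial differential equation it satisfies. A direct computation using \eqref{rescale-soln} and $u_t=\Delta\log u$ shows that $\tilde{u}$ solves
$$
\tilde{u}_t=\Delta\log\tilde{u}+2\beta\tilde{u}+\beta x\cdot\nabla\tilde{u}
\quad\mbox{in }\R^n\times(0,\infty),\qquad \tilde{u}(\cdot,0)=u_0,
$$
and that, by \eqref{psi-eqn}, $\psi_{\lambda_0}$ is precisely a stationary solution of this equation. Since $\tilde{u}(\cdot,0)=u_0$, the quantity to control is $w:=\tilde{u}-\psi_{\lambda_0}$, which by \eqref{u0-psi-L1} lies in $L^1(\R^n)$ at $t=0$.

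First I would establish the $L^1$ decay estimate \eqref{u-tilde-psi-L1-decay}, from which the $L^1$ convergence is immediate. Subtracting the two equations, $w$ satisfies
$$
w_t=\Delta(\log\tilde{u}-\log\psi_{\lambda_0})+2\beta w+\beta x\cdot\nabla w.
$$
Multiplying by a smooth approximation of $\mathrm{sgn}(w)$ and integrating over $\R^n$, I would use three facts: (i) since $\log$ is increasing, $\mathrm{sgn}(w)=\mathrm{sgn}(\log\tilde{u}-\log\psi_{\lambda_0})$, so by Kato's inequality the diffusion term contributes $\int\mathrm{sgn}(w)\Delta(\log\tilde{u}-\log\psi_{\lambda_0})\,dx\le 0$; (ii) the reaction term gives $2\beta\int|w|\,dx$; and (iii) the drift term, after integration by parts using $\nabla\cdot x=n$, gives $\beta\int x\cdot\nabla|w|\,dx=-n\beta\int|w|\,dx$. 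Combining these yields the differential inequality
$$
\frac{d}{dt}\|w(\cdot,t)\|_{L^1(\R^n)}\le -(n-2)\beta\|w(\cdot,t)\|_{L^1(\R^n)},
$$
and Gronwall's inequality together with $w(\cdot,0)=u_0-\psi_{\lambda_0}$ gives exactly \eqref{u-tilde-psi-L1-decay}. The exponent $(n-2)\beta=n\beta-2\beta$ is the net effect of the drift and reaction terms, which is precisely why $n\ge 3$ is needed for decay.

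The hard part will be justifying the integration by parts, i.e. showing the boundary integrals at infinity vanish. Because the drift coefficient $x$ grows while the solutions decay only like $\frac{2(n-2)}{\beta}\frac{\log|x|}{|x|^2}$ by \eqref{psi-decay-infty}, the boundary contribution on $\partial B_R$ is of order $R\int_{\partial B_R}|w|\,dS$, and mere integrability of $w$ does not force this to zero. I would therefore first derive a pointwise decay rate for $w$ faster than $|x|^{-n}$, using the trapping \eqref{u0-trapped-between}, the comparison principle for the rescaled equation (which also preserves $\psi_{\lambda_1}\le\tilde{u}\le\psi_{\lambda_2}$ for all $t$), and the precise asymptotics of $\psi_\lambda$; this simultaneously controls the boundary term and the gradient term arising in Kato's inequality, and lets me pass to the limit $R\to\infty$ along a suitable sequence.

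Finally, for the uniform convergence on compact subsets I would use that the bounds $\psi_{\lambda_1}\le\tilde{u}\le\psi_{\lambda_2}$ keep $\tilde{u}$ bounded away from $0$ and $\infty$ on every compact set, so the rescaled equation is uniformly parabolic there and standard parabolic regularity gives uniform interior estimates and equicontinuity of the family $\{\tilde{u}(\cdot,t)\}_{t\ge 1}$ together with its derivatives. By Arzel\`a--Ascoli, every sequence $t_k\to\infty$ has a subsequence along which $\tilde{u}(\cdot,t_k)$ converges locally uniformly; the already established $L^1$ convergence identifies every such limit as $\psi_{\lambda_0}$, and uniqueness of the limit upgrades this to convergence of the whole family, completing the proof.
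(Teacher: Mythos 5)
Your overall architecture (an $L^1$ decay estimate plus local compactness and identification of the limit) matches the paper's, and your final compactness step is essentially identical to the one used there. However, the core analytic step — the justification of the $L^1$ differential inequality — contains a genuine gap, and the specific repair you propose would fail. You plan to control the boundary terms in the integration by parts by first deriving a pointwise decay rate for $w=\4{u}-\psi_{\lambda_0}$ faster than $|x|^{-n}$ from the trapping \eqref{u0-trapped-between} and the comparison principle. But the trapping only yields $|w|\le\psi_{\lambda_2}-\psi_{\lambda_1}$, and by Lemma \ref{lemma-existence-and-uniqueness-plus-property-at-infty} and \eqref{eq-difference-between-two-radially-symmetric-sol-49} this difference behaves like $C|x|^{-2}$ at infinity — so much so that Lemma \ref{2psi-difference-not-L1-lem} shows it is not even integrable. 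The hypothesis \eqref{u0-psi-L1} gives no pointwise decay at all (think of $u_0$ equal to $\psi_{\lambda_2}$ on a sequence of thin annuli going to infinity). So a pointwise rate $o(|x|^{-n})$ for $n\ge 3$ is simply not available, and your stated mechanism for closing the argument does not exist. The real difficulty is the degeneracy at infinity: $|\log\4{u}-\log\psi_{\lambda_0}|\le|w|/\psi_{\lambda_1}\sim(|x|^2/\log|x|)\,|w|$, so the diffusion term's boundary contribution carries a weight that $L^1$ integrability of $w$ alone does not tame without a more careful device.

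A second, related gap: before you can run Gronwall on $\|w(\cdot,t)\|_{L^1}$ you must know this quantity is finite for $t>0$; propagation of the $L^1$ property is not automatic for this equation and is where most of the paper's work goes. The paper avoids your difficulties entirely by a different decomposition: it proves a plain (non-strict) $L^1$ contraction for the \emph{unrescaled} solutions, $\|u(\cdot,t)-\phi_{\lambda_0}(\cdot,t)\|_{L^1}\le\|u_0-\psi_{\lambda_0}\|_{L^1}$, via local $L^1$ comparison estimates with error $C(R^{n-2}/\log R)^{1/2}$ (Lemma \ref{L1-t-L1-time0-compare-lem1}), the Kato inequality giving $\Delta w\ge-|f|$ for the time-integrated quantity $w=\int_0^t|\log u-\log v|\,ds$, and a comparison of $w$ with the Newtonian potential of $|f|$ using the mean value property of subharmonic functions (Lemmas \ref{L1-bd-lem}--\ref{L1-contraction-lem}). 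The exponential factor $e^{-(n-2)\beta t}$ in \eqref{u-tilde-psi-L1-decay} then falls out for free from the Jacobian of the rescaling \eqref{rescale-soln} ($e^{2\beta t}\cdot e^{-n\beta t}=e^{-(n-2)\beta t}$), with no integration by parts in the rescaled variables needed. Your formal computation correctly predicts the constant $(n-2)\beta=n\beta-2\beta$, but to make it a proof you would need to replace the pointwise-decay step with something like the paper's potential-theoretic argument, or at least a cutoff argument that exploits the prior knowledge $w(\cdot,t)\in L^1$ — which itself must first be established.
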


\begin{thm}\label{main-thm2}
Let $n\ge 3$. Suppose $u_0$ satisfies
\begin{equation}\label{u0-w-cond1}
0\le u_0(x)\le\psi_{\lambda_1}(x)\quad\forall x\in\R^n
\end{equation}
and
\begin{equation}\label{u0-w-cond2}
\left|u_0(x)-\psi_{\lambda_{0}}(x)\right|\leq f(|x|)\in L^1(\R^n) 
\end{equation}
for some nonnegative radially symmetric function $f$ where $\psi_{\lambda_{0}}$, 
$\psi_{\lambda_1}$, are the radially symmetric solutions of \eqref{psi-eqn} 
with $\lambda=\lambda_0$, $\lambda_1$, respectively. 
Then the rescaled function $\4{u}(x,t)$ given by \eqref{rescale-soln} 
satisfies \eqref{u-tilde-psi-L1-decay} and converges uniformly on $\R^n$ 
and in $L^1(\R^n)$ to $\psi_{\lambda_0}$ as $t\to\infty$.
\end{thm}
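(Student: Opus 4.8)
The plan is to reduce Theorem \ref{main-thm2} to the already-established Theorem \ref{main-thm} by a comparison-and-approximation argument, since the only essential difference is that \eqref{u0-w-cond1} allows $u_0$ to touch zero (it drops the lower bound $\psi_{\lambda_1}\le u_0$ of \eqref{u0-trapped-between}). The first step is to observe that \eqref{u0-w-cond2} already encodes the correct decay at infinity, namely $u_0(x)\approx\frac{2(n-2)}{\beta}\frac{\log|x|}{|x|^2}$ via \eqref{psi-decay-infty}, and that \eqref{u0-w-cond2} immediately gives $u_0-\psi_{\lambda_0}\in L^1(\R^n)$, so hypothesis \eqref{u0-psi-L1} holds. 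The upper bound $u_0\le\psi_{\lambda_1}$ together with the comparison principle for \eqref{log-eqn} guarantees that the maximal solution $u$ stays trapped below $\phi_{\lambda_1}$ for all $t>0$, which in rescaled variables reads $\4{u}(\cdot,t)\le\psi_{\lambda_1}$; this is what will let us pass to the limit with uniform (rather than merely local) control.

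Next I would construct a monotone approximating sequence. For each $\e>0$ set $u_{0,\e}=\max(u_0,\e\psi_{\lambda_1})$ (or more carefully $u_{0,\e}=u_0+\e\psi_{\lambda_1}$ truncated below), so that $u_{0,\e}$ now satisfies the two-sided trapping \eqref{u0-trapped-between} with lower constant depending on $\e$, while still obeying \eqref{u0-psi-L1} with some $\lambda_{0,\e}\to\lambda_0$ and $\|u_{0,\e}-\psi_{\lambda_0}\|_{L^1}\to\|u_0-\psi_{\lambda_0}\|_{L^1}$ as $\e\to 0$. Let $u_\e$ be the corresponding maximal solution and $\4{u}_\e$ its rescaling. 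Theorem \ref{main-thm} applies to each $u_\e$, giving $\4{u}_\e(\cdot,t)\to\psi_{\lambda_{0,\e}}$ uniformly on compacta and in $L^1$, together with the decay estimate \eqref{u-tilde-psi-L1-decay}. By the comparison principle the $u_\e$ are monotone in $\e$ and decrease to $u$ as $\e\downarrow 0$; passing to the limit in the $L^1$ contraction and in the convergence statement then yields \eqref{u-tilde-psi-L1-decay} for $u$ and the convergence of $\4{u}$ to $\psi_{\lambda_0}$ in $L^1(\R^n)$.

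To upgrade local uniform convergence to uniform convergence on all of $\R^n$, I would use the uniform upper bound $\4{u}(\cdot,t)\le\psi_{\lambda_1}$ and the tail decay \eqref{psi-decay-infty}: both $\4{u}(\cdot,t)$ and $\psi_{\lambda_0}$ are squeezed by a common integrable envelope decaying like $\frac{\log|x|}{|x|^2}$, so the sup of $|\4{u}-\psi_{\lambda_0}|$ over the region $|x|\ge R$ is bounded by $2\psi_{\lambda_1}(x)$ there and hence is small uniformly in $t$ for $R$ large; combining this tail smallness with the local uniform convergence on $|x|\le R$ gives convergence uniformly on $\R^n$. The main obstacle I anticipate is the passage to the limit $\e\downarrow 0$: one must verify that the maximal solutions $u_\e$ genuinely decrease to the maximal solution $u$ of the original problem (as opposed to some other solution) and that the constant $\lambda_{0,\e}$ identifying the limiting profile converges to the correct $\lambda_0$. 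This requires care because the limiting profile is selected by the conserved/asymptotic mass quantity $u_0-\psi_{\lambda_0}$, and one must show this quantity is stable under the approximation; establishing the continuity of $\lambda\mapsto\psi_\lambda$ in $L^1_{loc}$ and the $L^1$-continuity of the map from initial data to the selected profile is the delicate point, and I would handle it using the monotone convergence theorem together with the contraction estimate \eqref{u-tilde-psi-L1-decay}.
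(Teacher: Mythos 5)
There are genuine gaps in your reduction. First, the approximating data $u_{0,\e}=\max(u_0,\e\psi_{\lambda_1})$ (or $u_0+\e\psi_{\lambda_1}$) does \emph{not} satisfy the two-sided trapping \eqref{u0-trapped-between}: by \eqref{psi-decay-infty} every profile $\psi_\mu$ has the \emph{same} leading asymptotics $\frac{2(n-2)}{\beta}\frac{\log|x|}{|x|^2}$, independent of $\mu$, so for $\e<1$ the function $\e\psi_{\lambda_1}$ eventually lies strictly below every $\psi_\mu$, and no lower barrier $\psi_{\mu_1}\le u_{0,\e}$ exists. Theorem \ref{main-thm} therefore does not apply to your $u_\e$. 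The approximation that works (and is what the paper uses in Theorem \ref{weak-convergence-thm}) is $u_{0,\lambda}=\max(u_0,\psi_\lambda)$ with $0<\lambda<\lambda_0$. Second, your ``$\lambda_{0,\e}\to\lambda_0$'' step is not merely delicate but unrepairable as stated: by Lemma \ref{2psi-difference-not-L1-lem}, $\psi_{\lambda}-\psi_{\lambda'}\notin L^1(\R^n)$ whenever $\lambda\ne\lambda'$, so the map $\lambda\mapsto\psi_\lambda$ is not continuous in $L^1$ and a limiting profile $\psi_{\lambda_{0,\e}}$ with $\lambda_{0,\e}\ne\lambda_0$ can never be reconciled with convergence to $\psi_{\lambda_0}$. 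The correct observation is that the profile need not move at all: with $u_{0,\lambda}=\max(u_0,\psi_\lambda)$ and $\lambda<\lambda_0$ one checks pointwise that $|u_{0,\lambda}-\psi_{\lambda_0}|\le|u_0-\psi_{\lambda_0}|$, so every approximant is compared with the \emph{same} $\psi_{\lambda_0}$ and the $L^1$ contraction passes to the limit with the right constant; this yields \eqref{u-tilde-psi-L1-decay} and the $L^1$ convergence.

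The third and most substantial gap is the passage from $L^1$ convergence to (locally) uniform convergence. Your plan is to ``pass to the limit in the convergence statement'' for the $u_\e$, but for each fixed $\e$ the convergence $\4{u}_\e(\cdot,t)\to\psi_{\lambda_{0,\e}}$ holds only as $t\to\infty$ with no uniformity in $\e$, so the limits $\e\downarrow 0$ and $t\to\infty$ cannot be interchanged. What is actually needed is compactness of the family $\{\4{u}(\cdot,t)\}_{t\ge 2}$ in $C^0_{loc}$, i.e.\ uniform parabolicity of \eqref{u-tilde-eqn}, i.e.\ a positive lower bound for $\4{u}$ on compact sets for $t\ge 2$ \emph{even though $u_0$ may vanish}. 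This is the real content of the paper's Lemma \ref{tilde-u-bounded-above-below} and Corollary \ref{tilde-u-lower-bd-cor}, proved via a Newtonian-potential estimate on $\int_{t-1}^t|\log u_\lambda-\log\phi_{\lambda_0}|\,d\tau$ combined with the Aronson--B\'enilan inequality, and it is entirely absent from your argument. Your final tail estimate (squeezing $|\4{u}-\psi_{\lambda_0}|$ by $\psi_{\lambda_1}+\psi_{\lambda_0}$ for $|x|\ge R$ to upgrade local uniform convergence to uniform convergence on $\R^n$) is correct and is the right way to finish once the lower bound and the local convergence are in hand.
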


Note that by Lemma \ref{psi-monotone-in-eta-lem} proved later that
$\psi_{\lambda}(x)$ is a monotone increasing function of $\lambda>0$. Hence 
\eqref{u0-trapped-between} is well-defined. Also by Lemma 
\ref{2psi-difference-not-L1-lem} proved later that the condition 
\eqref{u0-psi-L1} is necessary to guarantee convergence of the rescaled 
function $\4{u}$ as $t\to\infty$.

Unless stated otherwise we will assume that \eqref{u0-trapped-between} holds 
for the rest of the paper. Then by \eqref{psi-decay-infty},
\eqref{u0-trapped-between}, and the result of \cite{H3} there exists a 
unique global maximal solution $u$ of \eqref{log-eqn} for $n\ge 3$. 
Note that by direct computation $\tilde{u}$ given by \eqref{rescale-soln}
satisfies
\begin{equation}\label{u-tilde-eqn}
\tilde{u}_t=\La\log\tilde{u}+2\beta\tilde{u}+\beta x\cdot\nabla\tilde{u}
\quad\mbox{ in }\R^n\times (0,\infty).
\end{equation}
Then by \eqref{self-similar-soln}, \eqref{rescale-soln} and 
\eqref{u0-trapped-between}, 
\begin{align}
&\phi_{\lambda_1}(x,t)\le u(x,t)\le\phi_{\lambda_2}(x,t)\quad\forall x\in\R^n,t\ge 0
\notag\\
\Rightarrow\quad&\psi_{\lambda_1}(x)\le\4{u}(x,t)\le\psi_{\lambda_2}(x)
\qquad\forall x\in\R^n,t\ge 0.\label{u-tilde-u-l-bd}
\end{align}
The plan of the paper is as follows. In section 
\ref{section-property-selfsimilar-soln} we will recall and establish some
properties of the self-similar solution $\phi$. We will prove Theorem 
\ref{main-thm} and Theorem \ref{main-thm2} in section 3 and section 4 
respectively. 

We start with some definitions. We say that $u$ is a solution of 
\eqref{log-eqn} if $u>0$ in $\R^n\times (0,\infty)$ and $u$ 
satisfies 
\begin{equation*}
u_t=\Delta\log u
\end{equation*}
in the classical sense in $\R^n\times (0,\infty)$ with
$$
u(\cdot,t)\to u_0\quad\mbox{ in }L_{loc}^1(\R^n) \quad\mbox{ as }t\to 0.
$$
We say that $u$ is a maximal solution of \eqref{log-eqn} in 
$\R^n\times (0,\infty)$ if $u$ is a solution of \eqref{log-eqn} in 
$\R^2\times (0,T)$ and $u\ge v$ for any solution $v$ of 
\eqref{log-eqn} in $\R^n\times (0,T)$. For any $R>0$ and $x_0\in\R^n$, let 
$B_R(x_0)=\{x\in\R^N:|x-x_0|<R\}$ and $B_R=B_R(0)$.
Let $\omega_n$ be the surface area of the unit sphere $S^{n-1}$ in $\R^n$.
For any $a\in\R$, let $a_{\pm}=\max (\pm a,0)$.

\section{Properties of the self-similar solution}
\label{section-property-selfsimilar-soln}
\setcounter{equation}{0}
\setcounter{thm}{0}

In this section we will recall and establish some properties of the 
self-similar solution $\phi$. We first recall a result of \cite{Hs4}.

\begin{lemma}(cf. Lemma 1.1 and Theorem 1.3 of \cite{Hs4})
\label{existence-property-self-similar-soln}
Let $n\ge 2$, $\lambda>0$, $\alpha,\beta\in\R$, such that either $\alpha\ge 0$
or $\beta>0$. Then there exists a unique solution $v$ of
\begin{equation}
\label{ode-eqn}
\left(\frac{v'}{v}\right)'+\frac{n-1}{r}\cdot\frac{v'}{v}
+\alpha v +\beta rv'=0,\quad v>0,\qquad\mbox{ in }(0,\infty)
\end{equation}
which satisfies
\begin{equation}\label{psi-at-zero}
v(0)=\lambda \qquad \mbox{and} \qquad v'(0)=0.
\end{equation}
Moreover
\begin{equation}\label{eq-property-v-+-k-r-v-'-=-positive}
v+\frac{\beta}{\alpha}rv'>0 \quad \mbox{in }[0,\infty)\quad\mbox{ if }
\alpha\ne 0
\end{equation}
and 
\begin{equation*}
\left\{\begin{aligned}
&v'<0 \quad \mbox{ in }(0,\infty)\quad\mbox{ if }\alpha>0\\
&v'>0 \quad \mbox{ in }(0,\infty)\quad\mbox{ if }\alpha<0.
\end{aligned}\right.
\end{equation*}
\end{lemma}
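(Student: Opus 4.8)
The plan is to treat \eqref{ode-eqn} as a singular initial value problem, construct the solution by a fixed point argument near the regular singular point $r=0$, and then propagate all the qualitative information by first-zero (maximum-principle type) arguments. First I would set $p=v'/v=(\log v)'$ and rewrite \eqref{ode-eqn}: multiplying by $r^{n-1}$ gives $(r^{n-1}p)'=-r^{n-1}(\alpha v+\beta rpv)$, and integrating from $0$ using $v(0)=\lambda$, $v'(0)=0$ yields the coupled integral system $p(r)=-r^{1-n}\int_0^r s^{n-1}(\alpha v+\beta spv)\,ds$ together with $v(r)=\lambda\exp(\int_0^r p)$. The weight $s^{n-1}$ absorbs the $1/r$ singularity, so on a short interval $[0,r_0]$ the right-hand side is a contraction on a suitable ball in $C([0,r_0])$, giving a unique local $C^1$ solution with $v>0$; standard continuation extends it to a maximal interval.

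Next I would fix the sign of $v'$. With $q=r^{n-1}p$, the relation $q'=-r^{n-1}(\alpha v+\beta rpv)$ shows $p(r)\sim-\tfrac{\alpha\lambda}{n}r$ near $0$, so $v'$ has the sign of $-\alpha$ for small $r$. If $v'$ first vanished at some $r_1>0$, so $p(r_1)=0$ while $p$ keeps one sign on $(0,r_1)$, then the formula gives $q'(r_1)=-r_1^{n-1}\alpha v(r_1)$, whose sign is opposite to the one forced by the monotonicity of $q$ up to its first zero; this contradiction yields $v'<0$ throughout when $\alpha>0$ and $v'>0$ throughout when $\alpha<0$.

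The core of the statement is the positivity \eqref{eq-property-v-+-k-r-v-'-=-positive}, and this is where I expect the main difficulty. Set $F=\alpha v+\beta rv'$, so that $v+\tfrac{\beta}{\alpha}rv'=F/\alpha$ and, by the equation, $F=-\La\log v$ in radial form. Using $v''=(p'+p^2)v$ and $p'=-F-\tfrac{n-1}{r}p$ one computes the linear ODE $F'=F(p-\beta rv)-\beta(n-2)v'$, with $F(0)=\alpha\lambda$. When $\beta>0$, which is the case used in this paper (where $\alpha=2\beta$), the source term $-\beta(n-2)v'$ has the sign of $\alpha$ because $v'$ has the sign of $-\alpha$; hence at a hypothetical first zero $r_2$ of $F$ one gets $F'(r_2)=-\beta(n-2)v'(r_2)$ of the sign of $\alpha$, contradicting the way $F$ must cross zero, so $F$ keeps the sign of $\alpha$ and \eqref{eq-property-v-+-k-r-v-'-=-positive} follows. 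For $n=2$ the source vanishes and $F$ cannot change sign at all. The delicate point is that for $n\ge3$ this source no longer cooperates when $\alpha>0$ but $\beta<0$; there one must instead integrate the ODE against the integrating factor $\mu=\tfrac{\lambda}{v}\exp(\beta\int_0^r sv\,ds)$ and bound the resulting integral, which is the genuinely technical step.

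Finally, global existence on $(0,\infty)$ with $v>0$ follows by combining the monotonicity of $v$, the sign of $F$, and the a priori growth/decay estimates they force: these prevent $v$ from touching $0$ and $\log v$ from blowing up at a finite radius, so the maximal interval is all of $(0,\infty)$, completing the proof.
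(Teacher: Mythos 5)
The paper does not actually prove this lemma: it is quoted wholesale from Lemma 1.1 and Theorem 1.3 of \cite{Hs4}, so there is no internal proof to compare against. Your outline is the natural self-contained argument, and it is consistent with the machinery the paper does use elsewhere: the integrated identity you derive for $r^{n-1}v'/v$ (namely $r^{n-1}v'/v=-\beta r^n v+(n\beta-\alpha)\int_0^r\rho^{n-1}v\,d\rho$ after integration by parts) is exactly \eqref{eq-for-psi-after-integration-1} in the proof of Lemma \ref{psi-monotone-in-eta-lem}. I checked your key computation: with $p=v'/v$ and $F=\alpha v+\beta rv'$ one does get $F'=F(p-\beta rv)-\beta(n-2)v'$, and the first-zero arguments for the sign of $v'$ and then of $F$ are sound in the cases $\alpha>0,\beta>0$ and $\alpha<0,\beta>0$ (for $n=2$ the equation for $F$ is linear homogeneous, as you say).

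The one place you stop short is not actually delicate. In the configuration $\alpha>0$, $\beta<0$ (or $\beta=0$) with $n\ge 3$, which you defer to an unexecuted integrating-factor estimate, the positivity \eqref{eq-property-v-+-k-r-v-'-=-positive} is immediate: you have already shown $v'<0$ when $\alpha>0$, so for $\beta\le 0$ and $r>0$ the term $\beta rv'$ is nonnegative and $F=\alpha v+\beta rv'\ge\alpha v>0$ with no further work. Those are all the sign configurations permitted by the hypothesis ``$\alpha\ge 0$ or $\beta>0$'' together with $\alpha\ne 0$. With that one-line observation replacing the deferred step, and with the continuation argument for global existence spelled out (for $\alpha>0$, $v\le\lambda$ by monotonicity; for $\alpha<0$, $\beta>0$, the sign of $F$ gives $rv'/v<-\alpha/\beta$ and hence polynomial growth; either way $p$ is bounded on compact sets via the integral identity, so $\log v$ cannot blow up at finite radius), the proof is complete and, as far as one can tell, follows the same route as the cited source.
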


\begin{lemma}\label{lemma-existence-and-uniqueness-plus-property-at-infty}
Let $n\ge 3$, $\lambda>0$, $\alpha=2\beta>0$, and let $\psi$ be the unique 
solution of \eqref{ode-eqn}, \eqref{psi-at-zero}, in 
$(0,\infty)$. Then $\psi$ satisfies
\begin{equation}\label{eq-asymptotic-of-r-2-w-at-infty}
\lim_{r\to\infty}r^2\left(\psi(r)+\frac{1}{2}r\psi'(r)\right)=\frac{n-2}{\beta}
\end{equation}
\end{lemma}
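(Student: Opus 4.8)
The plan is to reduce everything to the asymptotics of the single scalar quantity $Q(r):=r^2\psi(r)$. Writing \eqref{ode-eqn} with $\alpha=2\beta$ in the divergence form $(r^{n-1}(\log\psi)')'+\beta r^{n-2}(r^2\psi)'=0$ and integrating from $0$ to $r$, using $\psi'(0)=0$ so that the boundary term at the origin vanishes, I would first obtain the first integral
\begin{equation*}
r^{n-1}\frac{\psi'}{\psi}=-\beta r^n\psi+\beta(n-2)\int_0^r s^{n-1}\psi(s)\,ds .
\end{equation*}
Since $\psi+\frac12 r\psi'=\frac1{2r}(r^2\psi)'=\frac{Q'}{2r}$, the quantity in the lemma equals $r^2(\psi+\frac12 r\psi')=\frac12\,rQ'$, so it suffices to prove that $rQ'(r)\to\frac{2(n-2)}{\beta}$ as $r\to\infty$.

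Next I would convert the first integral into a closed, \emph{linear} ODE. Introducing $J(r):=r^{2-n}\int_0^r s^{n-3}Q(s)\,ds$, so that $rJ'=Q-(n-2)J$, and setting $G:=(n-2)J-Q$, the first integral rewrites as $rQ'=2Q+\beta QG$. Differentiating $G$ and using the relation for $J'$ gives the exact identity $rG'=-(n-2+\beta Q)G-2Q$. The constant shift $\eta:=G+\frac2\beta$ then absorbs the inhomogeneity and yields
\begin{equation*}
r\eta'+(n-2+\beta Q)\eta=\frac{2(n-2)}{\beta}.
\end{equation*}
In these variables $rQ'=2Q+\beta Q(\eta-\tfrac2\beta)=\beta Q\eta$, so the target limit is equivalent to $(n-2+\beta Q)\eta\to\frac{2(n-2)}\beta$ together with $\eta\to0$.

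To extract this asymptotics I would use the two facts already available. First, the decay rate \eqref{psi-decay-infty} gives $Q(r)=r^2\psi(r)\sim\frac{2(n-2)}\beta\log r\to\infty$, so the coefficient $p(r):=n-2+\beta Q(r)$ tends to $+\infty$ but grows only like $\log r$, hence is slowly varying. Second, the positivity \eqref{eq-property-v-+-k-r-v-'-=-positive} with $\alpha=2\beta$ says $\psi+\frac12 r\psi'>0$, whence $Q'=2r(\psi+\frac12 r\psi')>0$ and $p$ is strictly increasing. Solving the linear ODE with the integrating factor $\exp\int^r p(s)s^{-1}\,ds$ and applying a Laplace--Watson (L'H\^opital-type) estimate—valid precisely because $p$ is monotone, tends to infinity, and varies slowly—one concludes $p(r)\eta(r)\to\frac{2(n-2)}\beta$. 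Since $p\to\infty$ forces $\eta\to0$, this gives $\beta Q\eta=p\eta-(n-2)\eta\to\frac{2(n-2)}\beta$, i.e. $rQ'\to\frac{2(n-2)}\beta$, and therefore $r^2(\psi+\frac12 r\psi')=\frac12\,rQ'\to\frac{n-2}\beta$.

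I expect the main obstacle to be this last step. The difficulty is that in $rQ'=2Q-\beta Q^2+\beta(n-2)QJ$ the two nonlinear terms $\beta Q^2$ and $\beta(n-2)QJ$ each blow up like $(\log r)^2$ and cancel to leading order, so crude estimates on the first integral destroy the answer. The corrected variable $\eta$ is engineered exactly to expose the surviving $O(1/\log r)$ balance, and controlling it rigorously requires the monotone, slowly varying growth of $p$ to push the integrating-factor estimate through; this is where \eqref{psi-decay-infty} and the monotonicity $Q'>0$ are indispensable.
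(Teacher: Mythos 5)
Your proof is correct and is, at bottom, the same argument as the paper's: both reduce \eqref{eq-asymptotic-of-r-2-w-at-infty} to a first--order linear ODE for a quantity proportional to $r\,(r^2\psi)'$, solve it with an integrating factor built from $r^{n-2}\exp\bigl(\beta\int^r\rho\psi\,d\rho\bigr)$, and extract the limit by an l'H\^opital/Laplace-type passage using \eqref{psi-decay-infty}. (Indeed, the paper's identity $\bigl(r^{n-6}f(r)w(r)\bigr)_r=(n-2)r^{n-3}f\psi$ with $w=r^4(\psi+\tfrac12 r\psi')=\tfrac12 r^3 Q'$ is, up to a constant factor, exactly your integrated equation for $\eta$, since $r^{n-6}f\,w=\mathrm{const}\cdot F\eta$.) The one substantive difference is in the inputs to the final limit: the paper finishes the l'H\^opital computation by importing $2+r\psi_r/\psi\to0$ from (2.26) of \cite{Hs4}, whereas your $\eta=\tfrac1\beta\bigl(2+r\psi'/\psi\bigr)$ is precisely that quantity and your version instead uses the monotonicity $Q'>0$ (from \eqref{eq-property-v-+-k-r-v-'-=-positive}) plus the slow growth $p\sim2(n-2)\log r$, recovering $\eta\to0$ as a by-product; this makes the lemma slightly more self-contained. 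The only step needing care is the ``Laplace--Watson'' estimate: the asymptotic $p\sim C\log r$ by itself does not yield the pointwise bound $rp'/p^2\to0$ that a naive l'H\^opital requires (a priori $p'$ could oscillate). Either invoke $\psi'<0$ from Lemma \ref{existence-property-self-similar-soln} to get $0<rQ'<2Q$, hence $rp'/p^2<2/(\beta Q)\to0$, after which l'H\^opital applies verbatim; or keep only monotonicity and slow variation and prove $\frac{p(r)}{F(r)}\int^r F(s)s^{-1}\,ds\to1$ by splitting the integral at $r^{1-\delta}$. With that detail supplied, your argument closes and agrees with the paper's.
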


\begin{proof}
Let
\begin{equation}\label{eq-denote-r-4-v-by-w-1}
w(r)=r^4\left(\psi (r)+\frac{1}{2}r\psi '(r)\right).
\end{equation}
By direct computation $w(r)$ satisfies
\begin{equation*}
w_r+\left(\frac{n-6}{r}-\frac{\psi_r}{\psi}+\beta r\psi\right)w=(n-2)r^3\psi
\quad\forall r>0.
\end{equation*}
Hence
\begin{equation*}
\left(r^{n-6}f(r)w(r)\right)_r=(n-2)r^{n-3}f(r)\psi(r)\quad\forall r>0
\end{equation*}
where
\begin{equation*}
f(r)=\frac{\lambda}{\psi (r)}e^{\beta\int_0^r\rho\psi (\rho)\,d\rho}.
\end{equation*}
Integrating over $r\ge 1$,
\begin{equation}\label{eq-simplifying-of-w-over-r-after-integration-0}
\frac{w(r)}{r^2}=\frac{f(1)w(1)}{r^{n-4}f(r)}+\frac{(n-2)\int_{1}^{r}
\rho^{n-3}f(\rho)\psi(\rho)\,d\rho}{r^{n-4}f(r)}\quad\forall r>1.
\end{equation}
By \eqref{psi-decay-infty} there exists a constant 
$\rho_0\ge 1$ such that 
\begin{equation}\label{psi-u-l-bd}
\frac{4(n-2)\log\rho}{\beta\rho^2}>\psi(\rho)
>\frac{(n-2)\log\rho}{\beta\rho^2}, \qquad \forall 
\rho>\rho_0.
\end{equation}
Then 
\begin{align}
&f(\rho)\psi(\rho)\ge\lambda e^{(n-2)\int_{\rho_0}^{\rho}
\frac{\log s}{s}\,ds}\ge\lambda e^{\frac{n-2}{2}
\left[(\log\rho)^2-(\log\rho_0)^2\right]}\quad\forall \rho>\rho_0
\label{eq-lower-bound-of-f-times-v-0}\\
\Rightarrow\quad&\int_{1}^{r}\rho^{n-3}f(\rho)\psi(\rho)\,d\rho\to\infty
\quad\mbox{ as }r\to\infty\label{integral-f-to-infty}.
\end{align}
By \eqref{psi-u-l-bd} and \eqref{eq-lower-bound-of-f-times-v-0},
\begin{equation}\label{denumerator-to-infty}
r^{n-4}f(r)=\left(\frac{r^{n-4}}{\psi (r)}\right)f(r)\psi(r)
\geq \frac{\beta r^{n-2}f(r)\psi(r)}{4(n-2)\log r}\to\infty \qquad 
\mbox{as $r\to\infty$}.
\end{equation}
By \eqref{eq-simplifying-of-w-over-r-after-integration-0},  
\eqref{integral-f-to-infty},  \eqref{denumerator-to-infty},
and the  l'Hospital rule,
\begin{equation}\label{w-r2-limit-eqn}
\lim_{r\to\infty}\frac{w(r)}{r^2}
=\lim_{r\to\infty}\frac{(n-2)r^{n-3}f\psi}{(n-4)r^{n-5}f+r^{n-4}
\left(\beta r\psi-\frac{\psi_r}{\psi}\right)f}
=\lim_{r\to\infty}\frac{(n-2)r^2\psi}{(n-4)+r\left(\beta r\psi
-\frac{\psi_r}{\psi}\right)}.
\end{equation}
By (2.26) in \cite{Hs4}, we get
\begin{equation}\label{identity1}
\lim_{r\to\infty}\left(2+\frac{r\psi_r}{\psi}\right)=0.
\end{equation}
Since
\begin{equation}\label{identity2}
(n-4)+r\left(\beta r\psi-\frac{\psi_r}{\psi}\right)
=(n-2)+\beta r^2\psi-\left(2+\frac{r\psi_r}{\psi}\right)
\end{equation}
and by \eqref{psi-decay-infty} $r^2\psi(r)\to\infty$ as $r\to\infty$,
by \eqref{w-r2-limit-eqn}, \eqref{identity1}, and \eqref{identity2},
we have 
\begin{equation*}
\lim_{r\to\infty}\frac{w(r)}{r^2}
=\lim_{r\to\infty}\frac{(n-2)r^2\psi}{n-2+\beta r^2\psi}=\frac{(n-2)}{\beta}
\end{equation*}
and the lemma follows.
\end{proof}

\begin{lemma}\label{psi-monotone-in-eta-lem}
Let $n\beta>\alpha>0$, $\lambda_2>\lambda_1>0$, and let $v_{\lambda_1}$, 
$v_{\lambda_2}$ be the radially symmetric solutions of \eqref{ode-eqn}, 
\eqref{psi-at-zero}, in $(0,\infty)$ with $\lambda=\lambda_1$, 
$\lambda_2$, respectively. Then
\begin{equation*}
v_{\lambda_2}(r)>v_{\lambda_1}(r)>0, \qquad \forall r\geq 0.
\end{equation*} 
\end{lemma}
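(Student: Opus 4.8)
The plan is to reduce the monotonicity in $\lambda$ to a comparison of the two solutions at a hypothetical first crossing point, where a suitable integral identity will force the derivatives to have the wrong sign. Positivity of each solution, i.e.\ $v_{\lambda_1}>0$ and $v_{\lambda_2}>0$, is already furnished by Lemma~\ref{existence-property-self-similar-soln}, so the entire content is the strict inequality $v_{\lambda_2}>v_{\lambda_1}$.

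First I would recast \eqref{ode-eqn} in divergence form. Since $\left(\frac{v'}{v}\right)'+\frac{n-1}{r}\frac{v'}{v}=r^{-(n-1)}\left(r^{n-1}\frac{v'}{v}\right)'$, multiplying \eqref{ode-eqn} by $r^{n-1}$ gives $\left(r^{n-1}\frac{v'}{v}\right)'=-\alpha r^{n-1}v-\beta r^n v'$. Integrating this from $0$ to $s$ (the boundary term at $r=0$ vanishes because $v(0)=\lambda>0$ and $v'(0)=0$) and integrating the last term by parts, using $r^n v\to 0$ as $r\to 0$, I obtain the key identity
\begin{equation*}
s^{n-1}\frac{v'(s)}{v(s)}=(n\beta-\alpha)\int_0^s r^{n-1}v(r)\,dr-\beta s^n v(s),\qquad s>0.
\end{equation*}
This is exactly the place where the hypothesis $n\beta>\alpha$ enters: it makes the coefficient $n\beta-\alpha$ of the integral strictly positive.

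Next, arguing by contradiction, I would suppose the two solutions cross and set $r_0=\inf\{r>0:v_{\lambda_2}(r)\le v_{\lambda_1}(r)\}$. Since $v_{\lambda_2}(0)=\lambda_2>\lambda_1=v_{\lambda_1}(0)$, continuity gives $r_0>0$, $v_{\lambda_2}>v_{\lambda_1}$ on $[0,r_0)$, $v_{\lambda_2}(r_0)=v_{\lambda_1}(r_0)=:a>0$, and, because the positive difference $v_{\lambda_2}-v_{\lambda_1}$ decreases to zero at $r_0$, also $v_{\lambda_2}'(r_0)\le v_{\lambda_1}'(r_0)$. Applying the identity above to both $v_{\lambda_1}$ and $v_{\lambda_2}$ at $s=r_0$ and subtracting, the terms $-\beta s^n v(s)$ cancel because $v_{\lambda_1}(r_0)=v_{\lambda_2}(r_0)$, leaving
\begin{equation*}
\frac{r_0^{n-1}}{a}\left(v_{\lambda_2}'(r_0)-v_{\lambda_1}'(r_0)\right)=(n\beta-\alpha)\int_0^{r_0}r^{n-1}\left(v_{\lambda_2}(r)-v_{\lambda_1}(r)\right)\,dr>0,
\end{equation*}
since the integrand is strictly positive on $(0,r_0)$. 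Hence $v_{\lambda_2}'(r_0)>v_{\lambda_1}'(r_0)$, contradicting $v_{\lambda_2}'(r_0)\le v_{\lambda_1}'(r_0)$. Therefore no crossing occurs and $v_{\lambda_2}>v_{\lambda_1}$ on all of $[0,\infty)$.

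The main obstacle is the nonlinear $v'/v$ terms, which make a naive subtraction of the two equations unwieldy and obscure the sign structure; the integral identity circumvents this by converting the local comparison at $r_0$ into a statement about the sign of $\int_0^{r_0}r^{n-1}(v_{\lambda_2}-v_{\lambda_1})\,dr$, which is governed precisely by the structural assumption $n\beta>\alpha$. One should also verify the convergence of the integrals and the vanishing of the boundary terms at $r=0$ used in deriving the identity, but this is routine given $v(0)=\lambda>0$ and $v'(0)=0$.
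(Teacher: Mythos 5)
Your proof is correct and follows essentially the same route as the paper: both derive the integrated identity $r^{n-1}v'/v=(n\beta-\alpha)\int_0^r\rho^{n-1}v\,d\rho-\beta r^n v$ and then run a first-crossing argument at $r_0$, where the cancellation of the $-\beta r_0^n v(r_0)$ terms and the strict positivity of $(n\beta-\alpha)\int_0^{r_0}\rho^{n-1}(v_{\lambda_2}-v_{\lambda_1})\,d\rho$ contradict $v_{\lambda_2}'(r_0)\le v_{\lambda_1}'(r_0)$. The only cosmetic difference is that the paper multiplies the identity through by $v/r^{n-1}$ before comparing, whereas you subtract the identities directly at the crossing point; the substance is identical.
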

\begin{proof}
Let $\lambda>0$ and let $v=v_{\lambda}$ be the solution of \eqref{ode-eqn}, 
\eqref{psi-at-zero}, in $(0,\infty)$. Multiplying \eqref{ode-eqn} by $r^{n-1}$ 
and integrating, 
\begin{align}
r^{n-1}\frac{v_r(r)}{v(r)}=&-\alpha\int_0^r\rho^{n-1}v(\rho)
\,d\rho-\beta\int_0^r\rho^nv_{r}(\rho)\,d\rho\notag\\
=&-\beta r^nv(r)+(n\beta -\alpha)\int_0^r\rho^{n-1}v(\rho)\,d\rho 
\qquad\quad\forall r>0\notag\\
\Rightarrow\quad
v_{r}(r)=&-\beta rv^2(r)+\frac{(n\beta -\alpha)v(r)}{r^{n-1}}
\int_0^r\rho^{n-1}v(\rho)\,d\rho \quad \forall r>0.
\label{eq-for-psi-after-integration-1}
\end{align}
Since $\lambda_2>\lambda_1$, there exists $r_0>0$ such that $(0,r_0)$ is the 
maximal interval such that
\begin{equation*}
v_{\lambda_2}(r)>v_{\lambda_1}(r)>0, \qquad \forall 0\leq r<r_0.
\end{equation*} 
Suppose $r_0<\infty$. Then
\begin{equation}\label{eq-condition-of-psi-eta-1-eta-2-at-r-0-1}
v_{\lambda_2,r}(r_0)\leq v_{\lambda_1,r}(r_0), \quad v_{\lambda_2}(r_0)
=v_{\lambda_1}(r_0) \quad \mbox{and} \quad \int_0^{r_0}\rho^{n-1}
v_{\lambda_2}(\rho)\,d\rho>\int_0^{r_0}\rho^{n-1}v_{\lambda_1}(\rho)\,d\rho.
\end{equation}
Hence, by \eqref{eq-for-psi-after-integration-1} and 
\eqref{eq-condition-of-psi-eta-1-eta-2-at-r-0-1},
\begin{align}\label{eq-condition-of-psi-eta-1-eta-2-at-r-0-2}
v_{\lambda_2,r}(r_0)&=-\beta r_0v_{\lambda_2}^2(r_0)
+\frac{(n\beta -\alpha)v_{\lambda_2}(r_0)}{r_0^{n-1}}
\int_0^{r_0}\rho^{n-1}v_{\lambda_2}(\rho)\,d\rho\notag\\
&>-\beta r_0v_{\lambda_1}^2(r_0)+\frac{(n\beta -\alpha)
v_{\lambda_1}(r_0)}{r_0^{n-1}}\int_0^{r_0}\rho^{n-1}v_{\lambda_1}(\rho)\,d\rho
=v_{\lambda_1,r}(r_0).
\end{align}
By \eqref{eq-condition-of-psi-eta-1-eta-2-at-r-0-1} and 
\eqref{eq-condition-of-psi-eta-1-eta-2-at-r-0-2} contradiction arises. 
Hence $r_0=\infty$ and the lemma follows.
\end{proof}

\begin{lemma}\label{2psi-difference-not-L1-lem}
Let $n\ge 3$, $\beta>0$, and $\psi_{\lambda}$ be the radially symmetric solution 
of \eqref{psi-eqn} for any $\lambda>0$.  Then $\psi_{\lambda_2}-\psi_{\lambda_1}
\notin L^1(\R^{N})$ for any $\lambda_2>\lambda_1>0$.
\end{lemma}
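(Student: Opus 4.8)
The plan is to argue by contradiction. Writing $w=\psi_{\lambda_2}-\psi_{\lambda_1}$ and $h=\log\psi_{\lambda_2}-\log\psi_{\lambda_1}$, Lemma \ref{psi-monotone-in-eta-lem} gives $w>0$ and $h>0$ on $[0,\infty)$, so by radial symmetry $\|w\|_{L^1(\R^n)}=\omega_n\int_0^\infty r^{n-1}w(r)\,dr$ and the membership $w\in L^1(\R^n)$ is equivalent to finiteness of $I:=\int_0^\infty r^{n-1}w(r)\,dr$. I will assume $I<\infty$ and derive a contradiction. Set $N(r)=\int_0^r\rho^{n-1}w(\rho)\,d\rho$, so that $N$ is positive, increasing, and $N(r)\uparrow I$ as $r\to\infty$.

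First I would produce a first–order identity for $h$. Subtracting the radial form of \eqref{psi-eqn} for $\psi_{\lambda_2}$ and $\psi_{\lambda_1}$ gives $\Delta h+2\beta w+\beta r w'=0$; multiplying by $r^{n-1}$ and using $\beta r^n w'=\beta(r^n w)'-n\beta r^{n-1}w$ recasts this in the divergence form $(r^{n-1}h')'+\beta(r^n w)'-(n-2)\beta r^{n-1}w=0$. Integrating from $0$ to $r$, where the boundary contributions vanish (recall $\psi_{\lambda_i}'(0)=0$), yields
\begin{equation*}
r^{n-1}h'(r)=(n-2)\beta N(r)-\beta r^n w(r),\qquad\text{i.e.}\qquad h'(r)=(n-2)\beta r^{1-n}N(r)-\beta r w(r).
\end{equation*}

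The key observation is that this identity makes the quantity $G(r):=h(r)+\beta r^{2-n}N(r)$ constant. Indeed, since $N'(r)=r^{n-1}w(r)$ we have $\beta r^{2-n}N'(r)=\beta r w(r)$, and a direct differentiation gives $G'(r)=h'(r)+\beta(2-n)r^{1-n}N(r)+\beta r^{2-n}N'(r)=0$ after cancellation. Hence $G\equiv C$ for some constant $C$. To evaluate $C$ I let $r\to\infty$: by \eqref{psi-decay-infty} both $r^2\psi_{\lambda_i}(r)/\log r\to 2(n-2)/\beta$, so $\psi_{\lambda_2}/\psi_{\lambda_1}\to 1$ and therefore $h(r)\to 0$; and since $N(r)\to I<\infty$ while $r^{2-n}\to 0$ (here $n\ge 3$), the second term $\beta r^{2-n}N(r)\to 0$ as well. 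Thus $C=0$ and
\begin{equation*}
h(r)=-\beta r^{2-n}N(r)<0\qquad\text{for all }r>0,
\end{equation*}
contradicting $h>0$. Therefore $I=\infty$, that is, $\psi_{\lambda_2}-\psi_{\lambda_1}\notin L^1(\R^n)$.

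The only genuinely delicate point is the evaluation of the constant $C$ at infinity, and this is exactly where the standing assumption $I<\infty$ is consumed: it forces both $r^{2-n}N(r)\to 0$ and, in combination with \eqref{psi-decay-infty}, $h(r)\to 0$. Everything else is bookkeeping; in particular the passage to the divergence form, the vanishing of the boundary terms at $r=0$, and the constancy of $G$ are all routine from the smoothness of $\psi_{\lambda_i}$ and the normalization in \eqref{psi-at-zero}.
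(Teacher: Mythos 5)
Your argument is correct, and it takes a genuinely different route from the paper. The paper's proof exploits the scaling identity $\psi_{\lambda}(x)=\lambda\psi_1(\sqrt{\lambda}|x|)$, writes $\psi_{\lambda_2}-\psi_{\lambda_1}=\int_{\lambda_1}^{\lambda_2}\partial_\lambda\psi_\lambda\,d\lambda$, interchanges the order of integration, and reduces the claim to the divergence of $\int_0^\infty\rho^{n-1}\bigl(\psi_1(\rho)+\tfrac{\rho}{2}\psi_1'(\rho)\bigr)\,d\rho$, which follows from the asymptotics $r^2(\psi+\tfrac12 r\psi')\to(n-2)/\beta$ established in Lemma \ref{lemma-existence-and-uniqueness-plus-property-at-infty}. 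You instead subtract the two equations, integrate the resulting divergence-form identity once to get $r^{n-1}h'(r)=(n-2)\beta N(r)-\beta r^n w(r)$, and observe that $G(r)=h(r)+\beta r^{2-n}N(r)$ is a first integral; the assumed finiteness of $N(\infty)$ then forces $G\equiv 0$ and hence $h<0$, contradicting Lemma \ref{psi-monotone-in-eta-lem}. I checked the computation: the divergence form, the vanishing of the boundary terms at $r=0$ (using $\psi_{\lambda_i}'(0)=0$, $\psi_{\lambda_i}(0)=\lambda_i>0$), the cancellation giving $G'=0$, and the limit $h(r)\to 0$ from \eqref{psi-decay-infty} are all in order. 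Your approach buys independence from both the scaling structure and Lemma \ref{lemma-existence-and-uniqueness-plus-property-at-infty} (only \eqref{psi-decay-infty} and the monotonicity lemma are used), whereas the paper's computation is more quantitative, effectively yielding the growth rate of $\int_{B_R}(\psi_{\lambda_2}-\psi_{\lambda_1})\,dx$. Two small remarks: the limit $h(r)\to 0$ follows from \eqref{psi-decay-infty} alone and does not consume the assumption $I<\infty$ (only $r^{2-n}N(r)\to 0$ does); and you could conclude even more directly by comparing $G(0^+)=\log(\lambda_2/\lambda_1)>0$ with $G(\infty)=0$, which contradicts the constancy of $G$ without invoking the sign of $h$ on all of $(0,\infty)$.
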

\begin{proof}
Let $\alpha=2\beta$ and $\lambda_2>\lambda_1>0$. Since $\psi_1$ satisfies 
\eqref{ode-eqn} and \eqref{psi-at-zero} with $\lambda=1$, the function 
$\lambda\psi_1(\sqrt{\lambda}r)$ is a solution of \eqref{ode-eqn} and 
\eqref{psi-at-zero} for any $\lambda>0$. Since $\psi_{\lambda}$ also satisfies 
\eqref{ode-eqn}, \eqref{psi-at-zero}, by 
Lemma \ref{existence-property-self-similar-soln},
\begin{equation}\label{self-similar-form}
\psi_{\lambda}(x)=\psi_{\lambda}(|x|)=\lambda\psi_1(\sqrt{\lambda}|x|)
\quad\forall x\in\R^n.
\end{equation}
By \eqref{self-similar-form} and 
Lemma \ref{existence-property-self-similar-soln},
\begin{equation}\label{eq-difference-between-two-radially-symmetric-sol-49}
\psi_{\lambda_2}(x)-\psi_{\lambda_1}(x)=\int_{\lambda_1}^{\lambda_2}
\frac{\partial \psi_{\lambda}}{\partial\lambda}\,d\lambda
=\int_{\lambda_1}^{\lambda_2}
\left(\psi_1(\sqrt{\lambda}|x|)+\frac{\sqrt{\lambda}|x|}{2}\psi_1'
(\sqrt{\lambda}|x|)\right)\,d\lambda>0\quad\forall x\in\R^n.
\end{equation}
Hence
\begin{align}
\int_{\R^n}\left(\psi_{\lambda_2}-\psi_{\lambda_1}\right)\,dx
&=\int_{\R^n}\left[\int_{\lambda_1}^{\lambda_2}\left(\psi_1(\sqrt{\lambda}|x|)
+\frac{\sqrt{\lambda}|x|}{2}\psi_1'(\sqrt{\lambda}|x|)\right)
\,d\lambda\right]\,dx\notag\\
&=\int_{\lambda_1}^{\lambda_2}\left[\int_{\R^n}\left(\psi_1(\sqrt{\lambda}|x|)
+\frac{\sqrt{\lambda}|x|}{2}\psi_1'(\sqrt{\lambda}|x|)\right)\,dx\right]
\,d\lambda\notag\\
&=\omega_n
\int_0^{\infty}\rho^{n-1}\left(\psi_1(\rho)+\frac{\rho}{2}\psi_1'(\rho)
\right)\,d\rho\cdot\int_{\lambda_1}^{\lambda_2}\lambda^{-\frac{n}{2}}
\,d\lambda\notag\\
&=\frac{2\omega_n}{n-2}(\lambda_1^{1-\frac{n}{2}}-\lambda_2^{1-\frac{n}{2}})
\int_0^{\infty}\rho^{n-1}\left(\psi_1(\rho)+\frac{\rho}{2}\psi_1'(\rho)
\right)\,d\rho.\label{integral-psi1-psi2}
\end{align}
By Lemma \ref{lemma-existence-and-uniqueness-plus-property-at-infty}
there exist constants $C>0$ and $\rho_0>0$ such that
\begin{equation*}
\psi_1(\rho)+\frac{\rho}{2}\psi_1'(\rho)> \frac{C}{\rho^2},\qquad\forall
\rho>\rho_0.
\end{equation*}
Since $n\ge 3$, the right hand side of \eqref{integral-psi1-psi2}
is equal to infinity and the lemma follows.
\end{proof}

\section{Asymptotic Behavior}\label{section-Asymptotic-Behavior}
\setcounter{equation}{0}
\setcounter{thm}{0}

In this section we will use a modification of the technique of \cite{DS1}
and \cite{HK} to prove the asymptotic large time behaviour of the global 
maximal solution of \eqref{log-eqn}.  

\begin{lemma}\label{L1-t-L1-time0-compare-lem1}
Let $\lambda>0$ and $\phi=\phi_{\lambda}$ be given by \eqref{self-similar-soln}
where $\psi_{\lambda}=\psi$ is the radially symmetric solution of 
\eqref{psi-eqn}.
Suppose $u$, $v$, are solutions of \eqref{log-eqn} with inital values 
$u_0$, $v_0$, respectively which satisfy $u$, $v\ge\phi$ in $\R^n\times 
(0,\infty)$. Then for any $T>0$ there exist constants $R_0>0$ and 
$C>0$ depending on $T$ such that 
\begin{equation*}
(i)\quad \left(\int_{B_{R}(x)}(u-v)_+(y,t)\,dy\right)^{\frac{1}{2}}\leq 
\left(\int_{B_{2R}(x)}(u_0-v_0)_+(y)\,dy\right)^{\frac{1}{2}}
+C\left(\frac{R^{n-2}}{\log R}\right)^{\frac{1}{2}}
\end{equation*}
and 
\begin{equation*}
(ii)\quad \left(\int_{B_{R}(x)}|u-v|(y,t)\,dy\right)^{\frac{1}{2}}
\leq \left(\int_{B_{2R}(x)}|u_0-v_0|(y)\,dy\right)^{\frac{1}{2}}
+C\left(\frac{R^{n-2}}{\log R}\right)^{\frac{1}{2}}
\end{equation*}
holds for any $R\ge R_0+|x|$, $x\in\R^n$, $0\le t\le T$.
\end{lemma}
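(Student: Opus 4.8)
The plan is to derive a first-order differential inequality for the localized positive mass $F(t)=\int_{\R^n}(u-v)_+(\cdot,t)\,\eta\,dy$, where $\eta=\eta_{x,R}\ge 0$ is a smooth cutoff equal to $1$ on $B_R(x)$ and supported in $B_{2R}(x)$. To manufacture the square-root structure appearing in the statement I would take $\eta=\zeta^4$ with $\zeta$ a standard cutoff obeying $|\D\zeta|\le C/R$ and $|\Delta\zeta|\le C/R^2$, so that $|\Delta\eta|\le C\zeta^2/R^2$ and hence $|\Delta\eta|^2/\eta\le C/R^4$ on the annulus $A=B_{2R}(x)\setminus B_R(x)=\supp\Delta\eta$. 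Writing $w=u-v$ and $g=\log u-\log v$, the equation gives $w_t=\Delta g$, and $w$, $g$ have the same sign since $\log$ is increasing. Applying Kato's inequality to $g$ (rigorously: smooth $\mathrm{sgn}_+(g)$ by an increasing $p_\3(g)$, integrate by parts, and discard the favorable term $-\int p_\3'(g)|\D g|^2\eta\le 0$, all boundary terms vanishing since $\eta$ is compactly supported) yields
$$\frac{d}{dt}\int w_+\,\eta\le\int g_+\,\Delta\eta=\int_A g_+\,\Delta\eta.$$

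The heart of the argument is to bound $\int_A g_+|\Delta\eta|$ by $C\,(R^{n-2}/\log R)^{1/2}F(t)^{1/2}$. For this I use the elementary inequality $\log u-\log v\le (u-v)/\sqrt{uv}$ for $u>v>0$ (equivalent to $\sqrt s-1/\sqrt s-\log s\ge 0$ for $s\ge 1$, whose derivative is $(\sqrt s-1)^2/2s^{3/2}\ge0$), which gives $g_+\le w_+/\sqrt{uv}$, together with $w_+/(uv)\le 1/\min(u,v)\le 1/\phi$ coming from the hypothesis $u,v\ge\phi$. Cauchy--Schwarz then produces
$$\int_A g_+|\Delta\eta|\le\int_A\frac{w_+}{\sqrt{uv}}|\Delta\eta|\le\Big(\int w_+\eta\Big)^{1/2}\Big(\int_A\frac{w_+}{uv}\frac{|\Delta\eta|^2}{\eta}\Big)^{1/2}\le F(t)^{1/2}\Big(\int_A\frac{1}{\phi}\frac{|\Delta\eta|^2}{\eta}\Big)^{1/2}.$$
The decisive point, and the reason no \emph{upper} bound on $u$ or $v$ is needed, is the refinement $\sqrt{uv}$ rather than $v$ in the denominator: it converts the a priori uncontrolled factor $w_+/(uv)$ into the bounded quantity $1/\phi$. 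The naive bound $g_+\le w_+/v\le w_+/\phi$ would instead leave $w_+/\phi^2$ after Cauchy--Schwarz, which cannot be controlled.

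It remains to estimate the weight $\int_A\frac1\phi\frac{|\Delta\eta|^2}{\eta}$ via the decay of $\psi$. On $A$ one has $|y|\le 3R$ and, for $R\ge R_0+|x|$ with $R_0=R_0(T)$ large, $|y|\ge R-|x|\ge R_0$; combining \eqref{self-similar-soln} with \eqref{psi-decay-infty} in the form $\psi(r)\ge\frac{(n-2)\log r}{\beta r^2}$ for $r\ge\rho_0$ gives, for $0\le t\le T$,
$$\phi(y,t)=e^{-2\beta t}\psi(e^{-\beta t}|y|)\ge\frac{(n-2)\,\log(e^{-\beta t}|y|)}{\beta|y|^2}\ge\frac{(n-2)\log|y|}{2\beta|y|^2},$$
the last step using $\log|y|-\beta t\ge\tfrac12\log|y|$ once $R_0\ge\max(\rho_0 e^{\beta T},e^{2\beta T})$. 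Since $s\mapsto s^2/\log s$ is increasing, $1/\phi\le C\,|y|^2/\log|y|\le C\,R^2/\log R$ on $A$, and with $|\Delta\eta|^2/\eta\le C/R^4$ and $\meas(A)\le CR^n$ I obtain $\int_A\frac1\phi\frac{|\Delta\eta|^2}{\eta}\le C\,R^{n-2}/\log R$. Hence $F'(t)\le C(R^{n-2}/\log R)^{1/2}F(t)^{1/2}$, i.e. $\frac{d}{dt}F^{1/2}\le\frac C2(R^{n-2}/\log R)^{1/2}$; integrating over $[0,t]\subset[0,T]$ and using $\int_{B_R(x)}w_+(t)\le F(t)$ and $F(0)\le\int_{B_{2R}(x)}w_+(0)$ proves (i), the factor $T$ being absorbed into $C$. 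Statement (ii) follows by running the identical computation with $|w|=|u-v|$ in place of $w_+$, using Kato's inequality for $|g|$ and the symmetric bound $|\log u-\log v|\le|u-v|/\sqrt{uv}$.

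I expect the main obstacle to be precisely the absence of an upper bound on the solutions, handled by the symmetric concavity estimate $g_+\le w_+/\sqrt{uv}$ just described; the only other delicate point is the rigorous justification of the Kato-type step, where one must smooth $\mathrm{sgn}_+(g)$ and pass to the limit so that the indefinite cross term $\D w\cdot\D g$ never enters the estimate.
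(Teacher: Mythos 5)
Your proof is correct and follows essentially the same route as the paper's, which simply establishes the lower bound \eqref{phi-lower-bd} on $\phi$ and then invokes the localized $L^1$ cutoff argument of Lemma 2.1 of \cite{HK} (the Daskalopoulos--Sesum scheme) that you reconstruct in full. The only cosmetic difference is that you control $(\log u-\log v)_+$ via $\log s\le (s-1)/\sqrt{s}$, whereas the paper (see the proof of Lemma \ref{L1-bd-lem}) uses $(\log s)_+\le C(s-1)_+^{1/2}$; both produce the same weight $\phi^{-1}|\Delta\eta|^2/\eta$ after Cauchy--Schwarz and hence the same $(R^{n-2}/\log R)^{1/2}$ error term.
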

\begin{proof}
Let $T>0$. By \eqref{self-similar-soln} and \eqref{psi-decay-infty}
there exist constants $R_1\ge 1$, $C_1>0$ such that
\begin{align}
&\frac{r^2\psi(r)}{\log r}\ge C_1\quad\forall r\ge R_1\notag\\
\Rightarrow\quad&\left(\phi(y,t)\right)^{-1}
\le\frac{|y|^2}{C_1\log(e^{-\beta t}|y|)}
\le\frac{2|y|^2}{C_1\log|y|}\quad\forall |y|\ge R_0
:=e^{2\beta T}R_1, 0\le t\le T.\label{phi-lower-bd}
\end{align}
Then by an argument similar to the proof of Lemma 2.1 in \cite{HK} but with 
the $B_k$ and (2.4) there being replaced by $\phi$ and \eqref{phi-lower-bd}, 
(i) and (ii) of the lemma follows.
\end{proof}

\begin{lemma}\label{L1-bd-lem}
Let $\lambda>0$ and $\phi=\phi_{\lambda}$ be given by \eqref{self-similar-soln}
where $\psi_{\lambda}$ is the radially symmetric solution of \eqref{psi-eqn}.
Suppose $u$, $v$, are solutions of \eqref{log-eqn} with inital values 
$u_0$, $v_0$, respectively which satisfy $u$, $v\ge\phi$ in $\R^n\times 
(0,\infty)$. If $f=u_0-v_0\in L^{1}(\R^n)$, then $u(\cdot,t)-v(\cdot,t)\in 
L^1(\R^n)$ for all $t\ge 0$.
\end{lemma}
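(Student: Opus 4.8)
The goal of Lemma~\ref{L1-bd-lem} is to upgrade the \emph{local} $L^1$ comparison estimate of Lemma~\ref{L1-t-L1-time0-compare-lem1}(ii) into a \emph{global} integrability statement: if $f=u_0-v_0\in L^1(\R^n)$, then $u(\cdot,t)-v(\cdot,t)\in L^1(\R^n)$ for each fixed $t\ge 0$. The essential idea is that Lemma~\ref{L1-t-L1-time0-compare-lem1}(ii) already controls $\int_{B_R(x)}|u-v|(y,t)\,dy$ in terms of the initial datum on $B_{2R}(x)$ plus an error term $C(R^{n-2}/\log R)^{1/2}$, and this control is uniform enough that sending $R\to\infty$ will recover global integrability. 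The plan is therefore to fix $t\in[0,T]$ and $x=0$, apply Lemma~\ref{L1-t-L1-time0-compare-lem1}(ii), and take the limit $R\to\infty$ carefully.

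First I would fix $T>0$ and restrict to $t\in[0,T]$; since the conclusion is for each fixed $t$, and $T$ is arbitrary, this loses no generality. Applying Lemma~\ref{L1-t-L1-time0-compare-lem1}(ii) with $x=0$ gives, for all $R\ge R_0$,
\begin{equation*}
\left(\int_{B_R}|u-v|(y,t)\,dy\right)^{1/2}
\le\left(\int_{B_{2R}}|u_0-v_0|(y)\,dy\right)^{1/2}
+C\left(\frac{R^{n-2}}{\log R}\right)^{1/2}.
\end{equation*}
Because $f=u_0-v_0\in L^1(\R^n)$, the first term on the right is bounded above by $\|f\|_{L^1(\R^n)}^{1/2}$ uniformly in $R$. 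The difficulty is that the error term $C(R^{n-2}/\log R)^{1/2}$ \emph{diverges} as $R\to\infty$ for $n\ge 3$, so squaring and letting $R\to\infty$ does not directly yield a finite bound; the naive estimate only controls $\int_{B_R}|u-v|$ by something growing like $R^{n-2}/\log R$.

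The main obstacle, then, is handling this divergent error term, and the resolution is that the growth $R^{n-2}/\log R$ is exactly the rate at which $\psi_\lambda$ itself fails to be integrable near infinity, so the error does not reflect genuine mass of $u-v$ but only the slow decay of the barrier $\phi$. To extract the right conclusion I would not square the inequality but instead square-root-expand it: using $(a+b)^{1/2}\le a^{1/2}+b^{1/2}$ in reverse together with the elementary bound $\int_{B_R}|u-v| \le \big((\int_{B_{2R}}|f|)^{1/2}+C(R^{n-2}/\log R)^{1/2}\big)^2$, I would split the cross terms and argue that for the purpose of establishing mere \emph{membership} in $L^1$ it suffices to show the integral over annuli $B_{2R}\setminus B_R$ is summable along a dyadic sequence $R_j=2^j R_0$. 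Here I expect to invoke the pointwise bound $u,v\le \phi_{\lambda_2}$ (or the trapping $\psi_{\lambda_1}\le\tilde u\le\psi_{\lambda_2}$ from \eqref{u-tilde-u-l-bd} applied to both solutions), so that on each annulus $|u-v|\le u+v$ is already integrable, and the local estimate is used only to transfer the finite initial mass on $B_{2R}$ to time $t$. Combining the uniform bound $\|f\|_{L^1}^{1/2}$ on the initial term with monotone convergence as $R\to\infty$ then gives $\int_{\R^n}|u-v|(y,t)\,dy<\infty$, completing the proof. The delicate point to get right is ensuring the constant $C$ and the threshold $R_0$ from Lemma~\ref{L1-t-L1-time0-compare-lem1} are genuinely independent of $R$ (they depend only on $T$), so that the passage to the limit is legitimate.
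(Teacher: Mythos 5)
Your plan does not close the main gap, and the mechanism the paper actually uses is absent from it. The difficulty you correctly identify --- that the error term $C\left(R^{n-2}/\log R\right)^{1/2}$ in Lemma \ref{L1-t-L1-time0-compare-lem1}(ii) diverges as $R\to\infty$ --- is not resolved by your dyadic-annulus device, for two reasons. First, Lemma \ref{L1-t-L1-time0-compare-lem1}(ii) provides only an \emph{upper bound} for $\int_{B_R}|u-v|(y,t)\,dy$; subtracting the bound on $B_{R_j}$ from the bound on $B_{2R_j}$ gives no control at all on $\int_{B_{2R_j}\setminus B_{R_j}}|u-v|(y,t)\,dy$, so summability over annuli cannot be extracted this way. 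Second, the pointwise bound you propose to use on each annulus, $|u-v|\le u+v\le 2\phi_{\lambda_2}$, yields annulus integrals of order $R_j^{n-2}\log R_j$ (since $\psi_{\lambda}(r)\approx\frac{2(n-2)}{\beta}\frac{\log r}{r^2}$ and $n\ge 3$), which is not summable along $R_j=2^jR_0$; indeed $\phi_{\lambda}(\cdot,t)\notin L^1(\R^n)$, so no pointwise comparison with the barrier can yield integrability of $u-v$. (Note also that the lemma assumes only $u,v\ge\phi$; an upper trapping by $\psi_{\lambda_2}$ is not available here.)

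The idea actually needed, and the one the paper uses (following Daskalopoulos--Sesum and [HK]), is potential-theoretic rather than a direct passage to the limit in $R$. One sets $w(x,t)=\int_0^t|\log u-\log v|(x,s)\,ds$, uses the Kato inequality to get $\partial_t|u-v|\le\Delta|\log u-\log v|$ in the distribution sense, hence $\Delta w\ge -|f|$ after integrating in time, so that $w-Z$ is subharmonic, where $Z$ is the Newtonian potential of $|f|\in L^1(\R^n)$. The local estimate of Lemma \ref{L1-t-L1-time0-compare-lem1} enters only to verify that $R^{-n}\int_{B_R(x)}w\,dy\to 0$: the divergent factor $\left(R^{n-2}/\log R\right)^{1/2}$ is multiplied by $\left(\int_{B_R}\phi^{-1}\right)^{1/2}\le CR^{(n+2)/2}$ and then divided by $R^n$, leaving $(\log R)^{-1/2}\to 0$; this is precisely where the logarithm in the decay of $\psi$ is exploited. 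The sub-mean-value property then gives $w\le Z$, and since $Z$ decays at infinity one can integrate the inequality $\partial_t|u-v|\le\Delta|\log u-\log v|$ against cutoffs to conclude $u(\cdot,t)-v(\cdot,t)\in L^1(\R^n)$. None of these steps (the time-integrated logarithmic difference, the Kato inequality, the comparison with the Newtonian potential) appear in your proposal, and without them the argument does not go through.
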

\begin{proof}
We will use a modification of the proof of Lemma 2.1 of \cite{DS1} and 
Lemma 2.2 of \cite{HK} to prove the lemma. Since the proof is similar to 
that of \cite{DS1} and \cite{HK}, we will only sketch the argument here. Let
\begin{equation*}
w(x,t)=\int_0^t\left|(\log u-\log v)\right|(x,s)\,ds.
\end{equation*}
Then by the Kato inequality \cite{Ka},
\begin{equation*}
\La\left|\log u-\log v\right|\geq \textbf{sign}(u-v)\La
\left(\log u-\log v\right).
\end{equation*}
Hence by \eqref{log-eqn}, 
\begin{equation*}
\frac{\partial}{\partial t}|u-v|\leq \La\left|\log u-\log v\right|
\end{equation*}
in the distribution sense in $\R^n\times (0,\infty)$. Integrating the above 
inequality in time, 
\begin{equation}\label{eq-relation-bet-w-and-f-0}
\La w\geq -|f| \qquad \mbox{on $\R^n$}.
\end{equation}
Let 
\begin{equation*}
Z(x)=\frac{1}{n(n-2)\omega_n}\int_{\R^n}\frac{|f(y)|}{|x-y|^{n-2}}\,dy
\end{equation*}
denote the Newtonian potential of $|f|$. Then by 
\eqref{eq-relation-bet-w-and-f-0},
\begin{equation}\label{eq-relation-bet-w-and-Z-1}
\La(w-Z)\geq 0
\end{equation}
in the sense of distributions in $\R^n$. Similar to the proof of Lemma 2.2 of 
\cite{HK} by \eqref{eq-relation-bet-w-and-Z-1} and an approximation 
argument the lemma would follow if we can show that 
\begin{equation}\label{w-average-limit-0}
\lim_{R\to\infty}\frac{1}{R^n}\int_{B_R(x)}w(y,t)\,dy=0\qquad\forall 
x\in\R^n,t>0.
\end{equation}
Since
\begin{equation*}
(\log u-\log v)_+=\left(\log\left(\frac{u}{v}\right)\right)_+
\le C\left(\frac{u}{v}-1\right)_+^{\frac{1}{2}}
\le C\phi^{-\frac{1}{2}}|u-v|^{\frac{1}{2}}
\end{equation*}
and similarly,
\begin{equation*}
(\log u-\log v)_-\le C\phi^{-\frac{1}{2}}|u-v|^{\frac{1}{2}},
\end{equation*}
we have
\begin{align}
\int_{B_R(x)}w(y,t)\,dy\le&C\int_0^t\int_{B_R(x)}\phi^{-\frac{1}{2}}
|u-v|^{\frac{1}{2}}\,dyds\notag\\
&\le C\int_0^t\left(\int_{B_R(x)}\phi(y,s)^{-1}\,dy\right)^{\frac{1}{2}}
\left(\int_{B_R(x)}|u-v|\,dy\right)^{\frac{1}{2}}\,ds.\label{w-integral-bd1}
\end{align}
Let $T>0$ and $R_0>1$ be as in the proof of 
Lemma \ref{L1-t-L1-time0-compare-lem1}. Then \eqref{phi-lower-bd} holds.
By \eqref{phi-lower-bd},
\begin{align}\label{1/phi-integral-bd}
\int_{B_R(x)}\phi(y,s)^{-1}\,dy=&\int_{B_R(x)\cap B_{R_0}}\phi(y,t)^{-1}\,dy
+\int_{B_R(x)\setminus B_{R_0}}\phi(y,t)^{-1}\,dy\notag\\
\le&C+C\int_{R_0\le |y|\le R+|x|}\frac{|y|^2}{\log |y|}\,dy\notag\\
\le&C\left(1+\frac{R^{n+2}}{R_0}\right)\quad\forall R>|x|+R_0,0\le s\le T.
\end{align}
By \eqref{w-integral-bd1}, \eqref{1/phi-integral-bd}, and 
Lemma \ref{L1-t-L1-time0-compare-lem1},
\begin{equation}\label{w-integral-bd2}
\int_{B_R(x)}w(y,t)\,dy\le C'\left(1+\frac{R^{n+2}}{R_0}\right)^{\frac{1}{2}}
\left(\|f\|_{L^{1}(\R^n)}^{\frac{1}{2}}
+\left(\frac{R^{n-2}}{\log R}\right)^{\frac{1}{2}}\right)
\quad\forall R>|x|+R_0,0\le t\le T
\end{equation}
for some constant $C'>0$ depending on $T$. Dividing both side of 
\eqref{w-integral-bd2} by $R^n$ and letting $R\to\infty$ we get
\eqref{w-average-limit-0} for any $0<t<T$. Since $T>0$ is arbitrary,
\eqref{w-average-limit-0} holds for all $t>0$.  
\end{proof}

By an argument similar to the proof of Corollary 2.2 of \cite{DS1} but
with Lemma \ref{L1-bd-lem} replacing Lemma 2.1 of \cite{DS1} in the proof
there we get the following result.

\begin{lemma}\label{L1-contraction-lem}
Let $\lambda>0$ and $\phi=\phi_{\lambda}$ be given by \eqref{self-similar-soln}
where $\psi_{\lambda}$ is the radially symmetric solution of \eqref{psi-eqn}.
Suppose $u$, $v$, are solutions of \eqref{log-eqn} with inital values 
$u_0$, $v_0$, respectively which satisfy $u$, $v\ge\phi$ in $\R^n\times 
(0,\infty)$. If $f=u_0-v_0\in L^{1}(\R^n)$, then 
$$
\|u(\cdot,t)-v(\cdot,t)\|_{L^1(\R^n)}\le\|u_0-v_0\|_{L^1(\R^n)}\quad\forall
t\ge 0.
$$
Hence if $\4{u}$, $\4{v}$, are the rescale functions of $u$, $v$, given 
by \eqref{rescale-soln} respectively, then 
\begin{equation}\label{u-v-tilde-compare}
\|\4{u}(\cdot,t)-\4{v}(\cdot,t)\|_{L^1(\R^n)}\le e^{-(n-2)\beta t}
\|u_0-v_0\|_{L^1(\R^n)}\quad\forall t\ge 0.
\end{equation}
\end{lemma}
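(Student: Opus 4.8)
The plan is to establish the $L^1$-contraction estimate
$$
\|u(\cdot,t)-v(\cdot,t)\|_{L^1(\R^n)}\le\|u_0-v_0\|_{L^1(\R^n)}\quad\forall t\ge 0
$$
and then deduce the rescaled estimate \eqref{u-v-tilde-compare} by a change of variables. Let me sketch both halves.

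First I would prove the contraction in the original variables. The preceding Lemma \ref{L1-bd-lem} already guarantees that $u(\cdot,t)-v(\cdot,t)\in L^1(\R^n)$ for every $t\ge 0$, so the quantities below are finite. Exactly as in the proof of Lemma \ref{L1-bd-lem}, the Kato inequality \cite{Ka} applied to \eqref{log-eqn} gives
$$
\frac{\partial}{\partial t}|u-v|\le\La|\log u-\log v|
$$
in the distribution sense on $\R^n\times(0,\infty)$. The natural next step is to integrate this against a cutoff $\zeta_R$ (say $\zeta_R\equiv 1$ on $B_R$, supported in $B_{2R}$, with $|\D\zeta_R|\le C/R$ and $|\La\zeta_R|\le C/R^2$), integrate by parts twice to move the Laplacian onto $\zeta_R$, and then let $R\to\infty$. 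Concretely, for $0<t_1<t_2$,
$$
\int_{\R^n}|u-v|(y,t_2)\,\zeta_R\,dy-\int_{\R^n}|u-v|(y,t_1)\,\zeta_R\,dy
\le\int_{t_1}^{t_2}\int_{\R^n}|\log u-\log v|\,\La\zeta_R\,dy\,ds.
$$
The main obstacle is controlling the right-hand side as $R\to\infty$: one must show the error term $\int_{t_1}^{t_2}\int_{B_{2R}\setminus B_R}|\log u-\log v|\,|\La\zeta_R|\,dy\,ds$ tends to $0$. I would handle this exactly as in Lemma \ref{L1-bd-lem}: use the pointwise bound $|\log u-\log v|\le C\phi^{-1/2}|u-v|^{1/2}$, apply Cauchy--Schwarz on the annulus together with the volume estimate \eqref{1/phi-integral-bd} for $\int\phi^{-1}$ and the localized $L^1$ bound from Lemma \ref{L1-t-L1-time0-compare-lem1}, and observe that the factor $|\La\zeta_R|\le C/R^2$ forces the annulus contribution to vanish. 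This is precisely the mechanism invoked in the remark preceding the lemma (``an argument similar to the proof of Corollary 2.2 of \cite{DS1}''). Letting $R\to\infty$ yields $\|u(\cdot,t_2)-v(\cdot,t_2)\|_{L^1}\le\|u(\cdot,t_1)-v(\cdot,t_1)\|_{L^1}$, and then $t_1\downarrow 0$ (using $u(\cdot,t)\to u_0$, $v(\cdot,t)\to v_0$ in $L^1_{loc}$ together with the uniform $L^1$ bounds) gives the contraction with $\|u_0-v_0\|_{L^1}$ on the right.

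It remains to pass to the rescaled functions. By \eqref{rescale-soln}, $\4u(x,t)=e^{2\beta t}u(e^{\beta t}x,t)$ and likewise for $\4v$, so substituting $y=e^{\beta t}x$ (with $dy=e^{n\beta t}\,dx$) gives
$$
\|\4u(\cdot,t)-\4v(\cdot,t)\|_{L^1(\R^n)}
=\int_{\R^n}e^{2\beta t}\,|u-v|(e^{\beta t}x,t)\,dx
=e^{2\beta t}e^{-n\beta t}\int_{\R^n}|u-v|(y,t)\,dy
=e^{-(n-2)\beta t}\|u(\cdot,t)-v(\cdot,t)\|_{L^1(\R^n)}.
$$
Combining this identity with the contraction just proved yields
$$
\|\4u(\cdot,t)-\4v(\cdot,t)\|_{L^1(\R^n)}\le e^{-(n-2)\beta t}\|u_0-v_0\|_{L^1(\R^n)},
$$
which is \eqref{u-v-tilde-compare}. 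The whole argument is a routine adaptation of the cutoff/Kato-inequality scheme already set up for Lemma \ref{L1-bd-lem}, so I expect no genuinely new difficulty beyond the annulus-error estimate flagged above.
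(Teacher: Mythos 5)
Your second half (the rescaling identity $\|\4u(\cdot,t)-\4v(\cdot,t)\|_{L^1}=e^{-(n-2)\beta t}\|u(\cdot,t)-v(\cdot,t)\|_{L^1}$ via $y=e^{\beta t}x$) is correct and is exactly how \eqref{u-v-tilde-compare} follows from the unrescaled contraction. The problem is in the first half: the cutoff mechanism you propose for killing the error term does not close. On the annulus $B_{2R}\setminus B_R$ you have, by Cauchy--Schwarz and \eqref{phi-lower-bd},
\begin{equation*}
\frac{C}{R^2}\int_{B_{2R}\setminus B_R}|\log u-\log v|\,dy
\le\frac{C}{R^2}\Bigl(\int_{B_{2R}}\phi^{-1}\,dy\Bigr)^{\frac12}
\Bigl(\int_{B_{2R}}|u-v|\,dy\Bigr)^{\frac12}
\le\frac{C}{R^2}\cdot\Bigl(\frac{R^{n+2}}{\log R}\Bigr)^{\frac12}
\Bigl(\|f\|_{L^1}+\frac{R^{n-2}}{\log R}\Bigr)^{\frac12},
\end{equation*}
which is of order $R^{(n-2)/2}(\log R)^{-1/2}\|f\|_{L^1}^{1/2}+R^{n-2}(\log R)^{-1}$ and blows up for every $n\ge 3$. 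Even invoking Lemma \ref{L1-bd-lem} to say that the annulus mass of $|u-v|$ is $o(1)$ does not help, because you have no rate to beat the diverging factor $R^{(n-2)/2}(\log R)^{-1/2}$ coming from $\phi^{-1}\sim|y|^2/\log|y|$. This same estimate is exactly what the paper uses in Lemma \ref{L1-bd-lem} to prove \eqref{w-average-limit-0}, but there the whole expression is divided by $R^n$, which is why the $R^{n-2}$ growth is harmless in that lemma and fatal in yours.

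The argument the paper actually intends (Corollary 2.2 of \cite{DS1}, with Lemma \ref{L1-bd-lem} in place of their Lemma 2.1) is potential-theoretic rather than a size estimate. Integrate $\partial_t|u-v|\le\La|\log u-\log v|$ over $B_R\times(0,t)$ to get
\begin{equation*}
\int_{B_R}|u-v|(y,t)\,dy\le\int_{B_R}|f|\,dy+\int_{\partial B_R}\frac{\partial w}{\partial\nu}\,d\sigma,
\qquad w(x)=\int_0^t|\log u-\log v|(x,s)\,ds,
\end{equation*}
and control the flux term using the structural facts already established in the proof of Lemma \ref{L1-bd-lem}: $0\le w\le Z$ with $Z$ the Newtonian potential of $|f|\in L^1$, so the spherical averages $\2w(R)$ tend to $0$; since $\int_{\partial B_R}\partial_\nu w=\omega_nR^{n-1}\2w\,'(R)$, if this flux stayed $\ge\delta>0$ for all large $R$ then integrating $\2w\,'(R)\ge\delta\omega_n^{-1}R^{1-n}$ would force $\lim_{R\to\infty}\2w(R)>\2w(R_0)\ge 0$, contradicting $\2w(R)\to0$. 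Hence $\liminf_{R\to\infty}\int_{\partial B_R}\partial_\nu w\le0$, and passing to a suitable sequence $R_j\to\infty$ gives the contraction. You should replace your annulus estimate with this flux argument; as written, your proof of the first inequality has a genuine gap.
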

We are now ready for the proof of Theorem \ref{main-thm}.

\noindent{\ni{\it Proof of Theorem \ref{main-thm}}:}
Let $\{t_i\}_{i=1}^{\infty}$ be a sequence of positive numbers such that 
$t_i\ge 1$ for all $i\in\Z^+$ and $t_i\to\infty$ as $i\to\infty$. By 
\eqref{u-tilde-u-l-bd} the equation \eqref{u-tilde-eqn} is uniformly 
parabolic on $B_R\times [0,\infty)$ for any $R>0$. By the 
Schauder estimates for parabolic equation \cite{LSU} the sequence 
$\4{u}(x,t_i)$ is equi-H\"older continuous in $C^2$ on every compact subset
of $\R^n$. Hence by the Ascoli Theorem and a diagonalization argument
the sequence $\4{u}(x,t_i)$ has a subsequence which we may assume without loss
of generality to be the sequence itself that converges uniformly on
every compact subset of $\R^n$ to some function $g$ as $i\to\infty$. 
By Lemma \ref{L1-contraction-lem} \eqref{u-v-tilde-compare} holds
with $\4{v}=\psi_{\lambda_0}$. Hence 
\begin{align*}
&\|\4{u}(\cdot,t_i)-\psi_{\lambda_0}\|_{L^1(\R^n)}\le e^{-(n-2)\beta t_i}
\|u_0-\psi_{\lambda_0}\|_{L^1(\R^n)}\quad\forall i\in\Z^+\\
\Rightarrow\quad&\|g-\psi_{\lambda_0}\|_{L^1(\R^n)}=0\quad\mbox{ as }i\to\infty\\
\Rightarrow\quad&g(x)=\psi_{\lambda_0}(x)\quad\forall x\in\R^n.
\end{align*}
Hence $u(x,t_i)$ converges uniformly on every compact subset of $\R^n$ 
to $\psi_{\lambda_0}$ as $i\to\infty$. Since the sequence $\{t_i\}_{i=1}^{\infty}$
is arbitrary, $u(x,t)$ converges uniformly on every compact subset of $\R^n$ 
to $\psi_{\lambda_0}$ as $t\to\infty$. By Lemma \ref{L1-contraction-lem} 
we get \eqref{u-tilde-psi-L1-decay} and the theorem follows.
\hfill$\square$\vspace{6pt}

\section{A more general result}
\setcounter{equation}{0}
\setcounter{thm}{0}

In this section we will prove Theorem \ref{main-thm2} and extend the 
convergence result of Theorem \ref{main-thm} to initial data not necessarily 
satisfying condition \eqref{u0-trapped-between}. We first start with a 
weaker convergence theorem.

\begin{thm}\label{weak-convergence-thm}
Let $n\ge 3$. Suppose $0\le u_0\in L^{\infty}(\R^n)$ satisfies 
\eqref{u0-psi-L1} where $\psi_{\lambda_{0}}$ is the radially symmetric 
solutions of \eqref{psi-eqn} with $\lambda=\lambda_0$. Suppose $u$ 
is the maximal solution of \eqref{log-eqn} in $\R^n\times(0,\infty)$
and $\4{u}(x,t)$ is given by \eqref{rescale-soln}. Then
\begin{equation}\label{eq-L-1-contraction-of-improvement-123}
\int_{\R^n}|u(\cdot,t)-\phi_{\lambda_0}(\cdot,t)|\,dx
\leq \|u_0-\psi_{\lambda_0}\|_{L^1(\R^N)}
\end{equation}
and \eqref{u-tilde-psi-L1-decay} holds. Hence $\4{u}$ converges to 
$\psi_{\lambda_0}$ in $L^1(\R^n)$ as $t\to\infty$.
\end{thm}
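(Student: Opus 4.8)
The plan is to reduce the assertion to Lemma \ref{L1-contraction-lem} by an approximation argument, since the present hypotheses supply no lower bound of the form $u\ge\phi_\lambda$ that would let us apply that lemma to $u$ directly. Fix a sequence $\eta_k\downarrow 0$ with $0<\eta_k<\lambda_0$ and set $u_{0,k}=\max(u_0,\psi_{\eta_k})$. First I would record that $u_{0,k}$ has the structure needed to invoke the contraction lemma. Since $0\le u_0$ and $\eta_k<\lambda_0$, Lemma \ref{psi-monotone-in-eta-lem} gives $\psi_{\eta_k}<\psi_{\lambda_0}$, and a pointwise comparison (distinguishing the sets $\{u_0\ge\psi_{\eta_k}\}$ and $\{u_0<\psi_{\eta_k}\}$) shows $|u_{0,k}-\psi_{\lambda_0}|\le|u_0-\psi_{\lambda_0}|$, so $u_{0,k}-\psi_{\lambda_0}\in L^1(\R^n)$. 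Let $u_k$ be the maximal solution of \eqref{log-eqn} with initial value $u_{0,k}\in L^\infty(\R^n)$; because $u_{0,k}\ge\psi_{\eta_k}$, the comparison principle gives $u_k\ge\phi_{\eta_k}$, while $\phi_{\lambda_0}\ge\phi_{\eta_k}$ by monotonicity in $\lambda$.

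With both $u_k$ and $\phi_{\lambda_0}$ dominating $\phi_{\eta_k}$, Lemma \ref{L1-contraction-lem} applies with $\phi=\phi_{\eta_k}$ and yields
\begin{equation*}
\|u_k(\cdot,t)-\phi_{\lambda_0}(\cdot,t)\|_{L^1(\R^n)}\le\|u_{0,k}-\psi_{\lambda_0}\|_{L^1(\R^n)}\qquad\forall t\ge 0.
\end{equation*}
The next step is to pass to the limit $k\to\infty$. Since $\psi_{\eta_k}\downarrow 0$ pointwise and $u_0\ge0$, we have $u_{0,k}\downarrow u_0$; moreover $u_{0,k}-u_0=(\psi_{\eta_k}-u_0)_+\le(\psi_{\lambda_0}-u_0)_+\in L^1(\R^n)$, so dominated convergence gives $u_{0,k}\to u_0$ in $L^1(\R^n)$ and hence $\|u_{0,k}-\psi_{\lambda_0}\|_{L^1(\R^n)}\to\|u_0-\psi_{\lambda_0}\|_{L^1(\R^n)}$. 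On the left, monotonicity of the solution map gives $u_k\ge u_{k+1}\ge u$, so $u_k\downarrow\2{u}$ for some $\2{u}\ge u$; by the local a priori estimates used in Section \ref{section-Asymptotic-Behavior} (the $u_k$ lie between the positive solutions $u$ and $u_1$, so \eqref{u-tilde-eqn} is uniformly parabolic on compacta) the limit $\2{u}$ is again a solution of \eqref{log-eqn} with initial value $u_0$, whence maximality of $u$ forces $\2{u}=u$. Applying Fatou's lemma to $|u_k(\cdot,t)-\phi_{\lambda_0}(\cdot,t)|\to|u(\cdot,t)-\phi_{\lambda_0}(\cdot,t)|$ then gives \eqref{eq-L-1-contraction-of-improvement-123}.

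Finally, \eqref{u-tilde-psi-L1-decay} follows from \eqref{eq-L-1-contraction-of-improvement-123} by the purely algebraic change of variables $y=e^{\beta t}x$. Using $\psi_{\lambda_0}(x)=e^{2\beta t}\phi_{\lambda_0}(e^{\beta t}x,t)$ one has $\4{u}(x,t)-\psi_{\lambda_0}(x)=e^{2\beta t}\bigl(u(e^{\beta t}x,t)-\phi_{\lambda_0}(e^{\beta t}x,t)\bigr)$, and scaling the integral (the Jacobian contributes $e^{-n\beta t}$) produces the factor $e^{(2-n)\beta t}$, so that
\begin{equation*}
\|\4{u}(\cdot,t)-\psi_{\lambda_0}\|_{L^1(\R^n)}=e^{-(n-2)\beta t}\|u(\cdot,t)-\phi_{\lambda_0}(\cdot,t)\|_{L^1(\R^n)}\le e^{-(n-2)\beta t}\|u_0-\psi_{\lambda_0}\|_{L^1(\R^n)}.
\end{equation*}
Since $n\ge 3$ makes $(n-2)\beta>0$, letting $t\to\infty$ shows $\4{u}\to\psi_{\lambda_0}$ in $L^1(\R^n)$. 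The step I expect to be the main obstacle is the passage to the limit in $k$: one must secure existence of the maximal solutions $u_k$ for the truncated data and, more delicately, confirm that the monotone limit $\2{u}$ loses no mass and retains the initial trace $u_0$, so that it coincides with the maximal solution $u$. This rests on the comparison principle, the local regularity/stability of the logarithmic diffusion equation, and the maximality characterization of $u$, rather than on any new estimate.
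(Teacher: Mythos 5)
Your proposal is correct and follows essentially the same route as the paper: the paper likewise sets $u_{0,\lambda}=\max(\psi_\lambda,u_0)$ for $0<\lambda<\lambda_0$, applies Lemma \ref{L1-contraction-lem} to the resulting maximal solutions $u_\lambda\ge\phi_\lambda$ together with $\phi_{\lambda_0}\ge\phi_\lambda$, and lets $\lambda\searrow 0$. Your added details (the pointwise bound $|u_{0,k}-\psi_{\lambda_0}|\le|u_0-\psi_{\lambda_0}|$, the identification of the monotone limit with $u$, and the explicit rescaling giving the factor $e^{-(n-2)\beta t}$) are exactly the steps the paper leaves implicit.
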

\begin{proof}
Since the proof is similar to the proof of Lemma 5.2 of \cite{HK}, we will 
only sketch the proof here. For any $0<\lambda<\lambda_0$, let $u_{\lambda}$ 
be the maximal global solution of \eqref{log-eqn} (cf. \cite{H3}) in 
$\R^n\times(0,\infty)$ with initial value
\begin{equation*}
u_{0,\lambda}(x)=\max (\psi_{\lambda}(x),u_0(x)).
\end{equation*} 
Then by the maximal principle 
\begin{equation}\label{u-lambda-l-bd}
u_{\lambda}\ge\max (\phi_{\lambda}(x,t),u(x,t))\quad\mbox{ in }\R^n\times 
(0,\infty)\quad\forall 0<\lambda<\lambda_0.
\end{equation}
and $u_{\lambda}$ decreases and converges to $u$ uniformly on every compact
subset of $\R^n\times (0,\infty)$ as $\lambda\searrow 0$. By 
\eqref{u-lambda-l-bd} and Lemma \ref{L1-contraction-lem},
\begin{equation}\label{u-lambda-phi-contraction}
\int_{\R^n}|u_{\lambda}(\cdot,t)-\phi_{\lambda_0}(\cdot,t)|\,dx
\leq \|u_0-\psi_{\lambda_0}\|_{L^1(\R^N)}\quad\forall 0<\lambda<\lambda_0.
\end{equation}
Letting $\lambda\searrow 0$ in \eqref{u-lambda-phi-contraction}
we get \eqref{eq-L-1-contraction-of-improvement-123}. 
\eqref{u-tilde-psi-L1-decay}  then follows directly from 
\eqref{eq-L-1-contraction-of-improvement-123} and the lemma follows.
\end{proof}

We next observe that by \eqref{psi-decay-infty} and an argument similar to 
the proof of Lemma 5.3 and Corollary 5.4 of \cite{HK} we have the following
results.

\begin{prop}[cf. Corollary 2.8 of \cite{H3}]
\label{lem-cf-Corollary-2-8-in-hui-1-in-improvement}
Let $n\ge 3$, $\lambda_0>0$, and $g(x)=\psi_{\lambda_0}(x,0)-h(x)$ for some 
radially symmetric function $0\le h\in L^{\infty}(\R^n)\cap L^1(\R^n)$ 
such that $g(x)\ge 0$ on $\R^n$. Then there exists a unique maximal global 
solution $u$ of \eqref{log-eqn} in $\R^n\times(0,\infty)$ with initial value 
$g$.
\end{prop}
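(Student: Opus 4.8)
The plan is to reduce the statement to the existence theory for data trapped between two self-similar profiles, which is the result of \cite{H3} quoted after \eqref{u0-trapped-between}, and then to pass to a monotone limit exactly as in the proof of Theorem \ref{weak-convergence-thm} and of Lemma 5.3 and Corollary 5.4 of \cite{HK}. Writing $\psi_{\lambda_0}(x)$ for the initial profile, I would first record the two-sided bound on the datum: since $0\le h$ we have $g=\psi_{\lambda_0}-h\le\psi_{\lambda_0}$, while $g\ge 0$ by hypothesis, so
\begin{equation*}
0\le g(x)\le\psi_{\lambda_0}(x)\qquad\forall x\in\R^n.
\end{equation*}
By \eqref{psi-decay-infty} this already controls the decay of $g$ from above by $\frac{2(n-2)}{\beta}\frac{\log|x|}{|x|^2}$, and the role of $h\in L^1(\R^n)\cap L^\infty(\R^n)$ will be to guarantee, through the $L^1$ comparison estimates of Section \ref{section-Asymptotic-Behavior}, that $g$ does not decay too fast for a global solution to persist.

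Next, for each $0<\lambda<\lambda_0$ I would set
\begin{equation*}
g_{\lambda}(x)=\max\left(\psi_{\lambda}(x),g(x)\right).
\end{equation*}
Since $\psi_{\lambda}<\psi_{\lambda_0}$ for $\lambda<\lambda_0$ by Lemma \ref{psi-monotone-in-eta-lem} and $g\le\psi_{\lambda_0}$, this gives
\begin{equation*}
\psi_{\lambda}(x)\le g_{\lambda}(x)\le\psi_{\lambda_0}(x)\qquad\forall x\in\R^n,
\end{equation*}
so $g_{\lambda}$ satisfies \eqref{u0-trapped-between} with $\lambda_1=\lambda$, $\lambda_2=\lambda_0$. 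By \eqref{psi-decay-infty} and the result of \cite{H3} there is then a unique global maximal solution $u_{\lambda}$ of \eqref{log-eqn} with initial value $g_{\lambda}$, and $u_{\lambda}\ge\phi_{\lambda}>0$ in $\R^n\times(0,\infty)$.

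Because $g_{\lambda}\searrow g$ as $\lambda\searrow 0$ (here $\psi_\lambda\to 0$ and $g\ge 0$), the comparison principle makes $u_{\lambda}$ monotone decreasing in $\lambda$, and it converges to some nonnegative $u$. Identifying this limit is the crux, and I would carry it out as in Lemma 5.3 and Corollary 5.4 of \cite{HK}: on any compact subset of $\R^n\times(0,\infty)$ on which the $u_\lambda$ are bounded between positive constants, the interior parabolic estimates of \cite{LSU} let one pass to the limit and conclude $u_t=\Delta\log u$ classically, while the attainment $u(\cdot,t)\to g$ in $L^1_{loc}(\R^n)$ as $t\to 0$ follows from the $L^1$ contraction and the convergence $g_\lambda\to g$, as in the proof of Theorem \ref{weak-convergence-thm}. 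For maximality and uniqueness, any solution $v$ with initial value $g$ satisfies $g\le g_\lambda$, hence $v\le u_\lambda$ by comparison and so $v\le u$; thus $u$ is maximal, and two maximal solutions must each dominate the other and therefore coincide.

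The main obstacle is precisely the limit $\lambda\searrow 0$: the lower barrier $\phi_{\lambda}$ degenerates to $0$, so one loses the uniform positivity that kept the approximating problems uniformly parabolic, and it must be shown---using \eqref{psi-decay-infty} to control the mass of $g$ near infinity---that $u$ remains strictly positive and does not instantaneously shed its initial data. This is exactly the content of Corollary 2.8 of \cite{H3} that the statement invokes, and the argument there (together with the estimates of Section \ref{section-Asymptotic-Behavior}) is what makes the limit an honest maximal global solution.
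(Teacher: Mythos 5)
Your proposal is correct and follows essentially the same route as the paper, which offers no written proof of this proposition beyond invoking \eqref{psi-decay-infty} together with Corollary 2.8 of \cite{H3} and the arguments of Lemma 5.3 and Corollary 5.4 of \cite{HK}. Your scheme --- approximating $g$ from above by $g_{\lambda}=\max(\psi_{\lambda},g)$, applying the existence result of \cite{H3} for data trapped between two profiles, and passing to the monotone limit $\lambda\searrow 0$ with the positivity of the limit deferred to the cited Corollary 2.8 of \cite{H3} --- is exactly the approximation device the authors use in the proof of Theorem \ref{weak-convergence-thm}, so it matches the intended argument.
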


\begin{cor}
\label{lem-cf-Corollary-2-8-in-hui-1-in-improvement}
Let $n\ge 3$ and let $\psi_{\lambda_0}(x)-h(x)\le u_0(x)\le
\psi_{\lambda_0}(x)$ for some radially symmetric function $h\in 
L^{\infty}(\R^n)\cap L^1(\R^n)$ satisfying $0\le h(x)\le \psi_{\lambda_0}(x)$ on 
$\R^n$. Then there exists a unique maximal solution $u$ of \eqref{log-eqn} in 
$\R^n\times(0,\infty)$ satisfying $0\le u(x,t)\le \phi_{\lambda_0}(x,t)$ in 
$\R^n\times (0,\infty)$ with initial value $u_0$.
\end{cor}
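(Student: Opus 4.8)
The plan is to build the maximal solution for the general (and possibly non-radial) datum $u_0$ by trapping it between two radially symmetric barriers, to each of which the existence theory already applies. First I would apply the preceding Proposition to the function $g(x):=\psi_{\lambda_0}(x)-h(x)$: since $h$ is radially symmetric, lies in $L^{\infty}(\R^n)\cap L^1(\R^n)$, and satisfies $0\le h\le\psi_{\lambda_0}$, the datum $g$ has exactly the form required there, so there is a unique maximal global solution $w$ of \eqref{log-eqn} with initial value $g$. This $w$ will serve as the lower barrier.

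The upper barrier is the self-similar solution $\phi_{\lambda_0}$ given by \eqref{self-similar-soln}, which solves \eqref{log-eqn} with initial value $\psi_{\lambda_0}$. Because $g\le\psi_{\lambda_0}$, I would next show $w\le\phi_{\lambda_0}$ by comparison; since the comparison principle for \eqref{log-eqn} on all of $\R^n$ is subtle in the presence of mass loss, this is carried out on each ball $B_R$ against the ball approximations and then passed to the limit $R\to\infty$. Together with $w>0$ this gives $0\le w\le\phi_{\lambda_0}$ in $\R^n\times(0,\infty)$.

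For the datum $u_0$ itself I would follow the ball-approximation scheme of \cite{H3}: for each $R>0$ solve the Dirichlet problem for \eqref{log-eqn} on $B_R\times(0,\infty)$ with initial value $u_0|_{B_R}$ and lateral boundary value $\phi_{\lambda_0}|_{\partial B_R}$, obtaining an approximation $u_R$. Using $g\le u_0\le\psi_{\lambda_0}$ and the comparison principle on $B_R$, each $u_R$ is trapped between the corresponding approximation of $w$ and of $\phi_{\lambda_0}$; passing to the monotone limit in $R$ yields a solution $u$ with $w\le u\le\phi_{\lambda_0}$, hence $0\le u\le\phi_{\lambda_0}$ with $u>0$. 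Since the ball problems carry the initial value $u_0$ and the decay \eqref{psi-decay-infty} controls the tails uniformly, $u$ attains $u_0$ in $L^1_{loc}(\R^n)$ as $t\to0$, so it is a genuine solution of \eqref{log-eqn} with initial value $u_0$. Choosing the lateral data as large as the upper bound $\phi_{\lambda_0}$ allows makes $u$ dominate every solution with datum $u_0$, so it is the maximal solution, and maximality determines it uniquely.

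The main obstacle I expect is justifying the comparison statements and the attainment of the initial value, rather than any algebra. For \eqref{log-eqn} with $n\ge3$ solutions may lose mass and the nonlinearity $\log u$ is singular where $u$ is small, so neither the comparison $w\le\phi_{\lambda_0}$ nor the bound $u_R\le\phi_{\lambda_0}$ can be asserted directly on $\R^n$; each must be obtained on the balls $B_R$ and carried to the limit, exactly as in the proof of Corollary 5.4 of \cite{HK}. Verifying that the monotone ball limit is genuinely the maximal solution, and not a strictly smaller one, is the other delicate point; it is handled by the specific approximation of \cite{H3} together with the lower barrier $w$, which keeps the limit positive and rules out instantaneous collapse.
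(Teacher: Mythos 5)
The paper gives no proof of this corollary at all — it simply asserts that it follows from \eqref{psi-decay-infty} and the arguments of Lemma 5.3 and Corollary 5.4 of \cite{HK} — and your outline (lower barrier from the preceding Proposition applied to $\psi_{\lambda_0}-h$, upper barrier $\phi_{\lambda_0}$, ball approximations in the style of \cite{H3} with comparison carried out on $B_R$ and passed to the limit) is precisely the route that citation points to, so your proposal is consistent with the paper's intended argument. The only step you should make fully explicit is the maximality claim: to see that your limit dominates an arbitrary solution $v$ with datum $u_0$ you need $v\le\phi_{\lambda_0}$ on $\partial B_R$, which follows from the comparison of $v$ with the maximal solution for the larger datum $\psi_{\lambda_0}$ as in \cite{H3}.
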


\begin{lemma}\label{tilde-u-bounded-above-below}
Let $n\ge 3$ and $0\le u_0\in L^{\infty}(\R^n)$ satisfy \eqref{u0-w-cond2} for 
some non-negative radially symmetric function $f$. Suppose $u$ is the 
maximal solution of \eqref{log-eqn} and $\4{u}$ is given by 
\eqref{rescale-soln}. Then there exist positive constants $C_1$, $C_2$, $C_3$ 
such that 
\begin{equation}\label{u-tilde-u-l-bd0}
C_1\frac{e^{-C_3\|f\|_{L^1(\R^n)}}\log |x|}{1+|x|^2}\le\4{u}(x,t) 
\leq C_2\frac{e^{C_3\|f\|_{L^1(\R^n)}}\log |x|}{1+|x|^2}\quad\forall |x|\ge 3,t\ge 2.
\end{equation}
\end{lemma}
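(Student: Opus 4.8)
The plan is to produce matching upper and lower pointwise bounds on $\4u(x,t)$ in the far field $|x|\ge 3$, $t\ge 2$, each comparable to the stationary profile $\psi_{\lambda_0}$. The key reduction is that the rescaling \eqref{rescale-soln} preserves logarithmic differences: since $\psi_{\lambda_0}(x)=e^{2\beta t}\phi_{\lambda_0}(e^{\beta t}x,t)$, one has $\log\4u(x,t)-\log\psi_{\lambda_0}(x)=(\log u-\log\phi_{\lambda_0})(e^{\beta t}x,t)$. Thus it suffices to bound $|\log u-\log\phi_{\lambda_0}|$ pointwise by $C+C_3\|f\|_{L^1}$ in the unscaled far field $|y|=e^{\beta t}|x|$ large, $t\ge 2$; exponentiating then yields $e^{-C_3\|f\|_{L^1}}\psi_{\lambda_0}(x)\lesssim\4u(x,t)\lesssim e^{C_3\|f\|_{L^1}}\psi_{\lambda_0}(x)$, and feeding in the decay \eqref{psi-decay-infty} converts $\psi_{\lambda_0}$ into the normalised profile $\frac{\log|x|}{1+|x|^2}$, the constants $C_1,C_2$ absorbing the fixed normalisation and the behaviour on the annulus near $|x|=3$.

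The heart of the argument is therefore a quantitative two-sided logarithmic comparison of $u$ with $\phi_{\lambda_0}$. Going back to \eqref{log-eqn}, $u$ and $\phi_{\lambda_0}$ are solutions with $|u_0-\psi_{\lambda_0}|\le f\in L^1(\R^n)$, so the mechanism of Lemma \ref{L1-bd-lem} applies verbatim: the time-integral $w(y,t)=\int_0^t|\log u-\log\phi_{\lambda_0}|(y,s)\,ds$ satisfies $\Delta(w-Z)\ge 0$ in the distributional sense, where $Z$ is the Newtonian potential of $|f|$. Since $f$ is radial and lies in $L^1\cap L^\infty$ (the latter because $u_0,\psi_{\lambda_0}\in L^\infty$), the potential $Z(y)$ is controlled by a constant multiple of $\|f\|_{L^1}$ for $|y|$ large, and the global subharmonic structure exploited in Lemma \ref{L1-bd-lem} then bounds $w(y,t)$ itself by $C\|f\|_{L^1}$ there. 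To pass from this time-integrated bound to a single-time pointwise bound I would invoke a one-sided Aronson--B\'enilan type estimate for $\Delta\log u=u_t$, which gives the time regularity needed to dominate $|\log u-\log\phi_{\lambda_0}|(y,t)$ by a time average of $w$, hence by $C_3\|f\|_{L^1}$ uniformly for $t\ge 2$.

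The role of the comparison facts is to secure the qualitative two-sided trapping that makes the above estimates run. For the lower barrier, the maximal solution $u$ dominates the solution with initial value $(\psi_{\lambda_0}-f)_+$; for the upper barrier I would use the monotone approximation $u_\lambda$ with initial data $\max(\psi_\lambda,u_0)$ from the proof of Theorem \ref{weak-convergence-thm}, which decreases to $u$ and satisfies $u_\lambda\ge\phi_\lambda$, together with Corollary \ref{lem-cf-Corollary-2-8-in-hui-1-in-improvement}. By the monotonicity of $\psi_\lambda$ in $\lambda$ (Lemma \ref{psi-monotone-in-eta-lem}) and the self-similar form \eqref{self-similar-form}, all these barriers share the common far-field decay $\frac{\log|x|}{|x|^2}$ dictated by \eqref{psi-decay-infty}, so they differ only in multiplicative constants; in particular the equation \eqref{u-tilde-eqn} is uniformly parabolic on far-field balls where $\4u$ is so trapped, which legitimises the local parabolic estimates used in the de-integration step.

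The main obstacle is precisely this quantitative step: upgrading the purely spatial $L^1$ closeness $\|\4u(\cdot,t)-\psi_{\lambda_0}\|_{L^1}\le\|f\|_{L^1}$ of Theorem \ref{weak-convergence-thm} into a single-time, pointwise, \emph{multiplicative} comparison carrying the explicit mass-dependent factor $e^{\pm C_3\|f\|_{L^1}}$, and doing so uniformly for all $t\ge 2$ rather than with a constant that degenerates in time. This demands the simultaneous use of the Newtonian-potential bound on $w$, the one-sided curvature estimate to remove the time integration, and the uniform parabolicity of \eqref{u-tilde-eqn} in the trapped region; the remaining work -- estimating $Z(y)$ for large $|y|$, applying \eqref{psi-decay-infty} to replace $\psi_{\lambda_0}$ by $\frac{\log|x|}{1+|x|^2}$, and checking the transition across $|x|=3$ -- is routine.
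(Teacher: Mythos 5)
Your proposal is correct and follows essentially the same route as the paper: show that the time-integrated logarithmic difference minus a Newtonian potential is subharmonic with vanishing averages, deduce a pointwise potential bound of size $C_3\|f\|_{L^1(\R^n)}|x|^{-(n-2)}$, remove the time integration with the Aronson--B\'enilan inequality $u_t\le u/t$ (applied to the approximating solutions $u_\lambda\ge\phi_\lambda$, then letting $\lambda\to 0$), and convert $\psi_{\lambda_0}$ into $\log|x|/(1+|x|^2)$ via \eqref{psi-decay-infty}. The one substantive difference is that you anchor the Newtonian potential at the initial time, where the difference is dominated by the \emph{radial} majorant $f$, whereas the paper integrates over the sliding window $[t-1,t]$ against the potential of $|u_\lambda-\phi_{\lambda_0}|(\cdot,t-1)$; the latter is only controllable by $\|f\|_{L^1}/|x|^{n-2}$ when that difference is radial, which is why the paper must first treat radially symmetric $u_0$ and then invoke a symmetrization argument, a step your variant quietly avoids.
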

\begin{proof}
We will use a modification of the proof of Proposition 6.2 of \cite{DS1}
to prove the lemma. We will first prove \eqref{u-tilde-u-l-bd0} 
under the assumption that $u_0(x)$ is radially symmetric in $r=|x|\ge 0$. 
Let $u_{\lambda}$, $0<\lambda<\lambda_0$, be as in the proof of 
Theorem \ref{weak-convergence-thm} and $t\ge 2$. Similar
to the proof of Lemma \ref{L1-bd-lem} the function
\begin{equation*}
w_{\lambda}(x)=\int_{t-1}^t|\log u_{\lambda}-\log\phi_{\lambda_0}|(x,\tau)
\,d\tau
\end{equation*}
satisfies
\begin{equation}\label{w-subharmonic}
\La(w_{\lambda}-Z_{\lambda})\geq 0 \qquad \mbox{in }\R^n
\end{equation}
and
\begin{equation}\label{w-lambda-average-limit}
\lim_{R\to\infty}\frac{1}{R^n}\int_{B_R(x)}w_{\lambda}(y)\,dy=0\quad\forall x\in\R^n
\end{equation}
where 
\begin{equation*}
Z_{\lambda}(x)=\int_{|x|}^{\infty}\frac{1}{\omega_n\rho^{n-1}}
\int_{|y|\leq\rho}|u_{\lambda}-\phi_{\lambda_0}|(y,t-1)\,dy\,d\rho
\quad\forall x\in\R^n, t\ge 2,
\end{equation*}
is the Newtonian potential of $|u_{\lambda}-\phi_{\lambda_0}|(\cdot,t-1)$.
Then by \eqref{w-subharmonic}, \eqref{w-lambda-average-limit}, and the mean 
value property for subharmonic function,
\begin{equation*}
w_{\lambda}(x)\leq Z_{\lambda}(x) \qquad \mbox{in }\R^n\quad\forall t\ge 2.
\end{equation*}
Hence 
\begin{equation}\label{w-lambda-decay}
w_{\lambda}(x)\le C_3\frac{\|(u_{\lambda}-\phi_{\lambda_0})(\cdot,t-1)
\|_{L^1(\R^N)}}{|x|^{n-2}}\quad\forall |x|\ge 1, t\ge 2,
\end{equation}
for some constant $C_3>0$. By \eqref{u0-w-cond2}, \eqref{w-lambda-decay},
and Theorem \ref{weak-convergence-thm},
\begin{equation}\label{log-u-tilde-integral-ineqn}
\int_{t-1}^t\log \phi_{\lambda_0}(x,\tau)\,d\tau
-C_3\frac{\|f\|_{L^1(\R^N)}}{|x|^{n-2}}
\le\int_{t-1}^t\log u_{\lambda}(x,\tau)\,d\tau
\le\int_{t-1}^t\log\phi_{\lambda_0}(x,\tau)\,d\tau
+C_3\frac{\|f\|_{L^1(\R^N)}}{|x|^{n-2}}
\end{equation}
holds for any $|x|\ge 1$ and $t\ge 2$. By \eqref{self-similar-soln} and 
\eqref{eq-property-v-+-k-r-v-'-=-positive}, 
\begin{equation*}
\phi_{_{\lambda_0,t}}(r,t)=-2\beta e^{-2\beta t}\left(\psi_{\lambda_0}(\rho)
+\frac{1}{2}\rho\psi_{\lambda_0}'(\rho)\right)\leq 0\quad\forall r\ge 0,t>0,
\rho=e^{-\beta t}r.
\end{equation*} 
Hence
\begin{equation}\label{log-u-tilde-u-l-bd}
\log\phi_{\lambda_0}(x,t)\le\int_{t-1}^t\log\phi_{\lambda_0}(x,\tau)\,d\tau
\le\log\phi_{\lambda_0}(x,t-1).
\end{equation}
Since by Lemma \ref{existence-property-self-similar-soln} 
$\psi_{\lambda_0}'(r)<0$ for all $r>0$, we have
\begin{equation}\label{phi-t-(t-1)-compare}
\phi_{\lambda_0}(x,t-1)=e^{-2\beta (t-1)}\psi_{\lambda_0}(e^{-\beta (t-1)}x)
\le e^{2\beta}e^{-2\beta t}\psi_{\lambda_0}(e^{-\beta t}x)
=e^{2\beta}\phi_{\lambda_0}(x,t).
\end{equation} 
By \eqref{log-u-tilde-integral-ineqn}, \eqref{log-u-tilde-u-l-bd}, and 
\eqref{phi-t-(t-1)-compare},
\begin{equation}\label{phi-u-lambda-ineqn}
\log\left(\frac{\phi_{\lambda_0}(x,t-1)}{C_4}\right)
\le\int_{t-1}^t\log u_{\lambda}(x,\tau)\,d\tau 
\le\log \left(C_4\phi_{\lambda_0}(x,t)\right) \qquad \forall |x|\ge 1,
t\ge 2,
\end{equation}
where $C_4=e^{2\beta+C_3\|f\|_{L^1}}$. Since $u_{\lambda_0}$ satisfies the 
Aronson-Benilan inequality (cf. \cite{H3}), 
$$
u_t\le\frac{u}{t}\qquad\qquad\mbox{ in }\R^n\times (0,\infty), 
$$
we have
\begin{align}\label{u-lambda0-different-time-compare}
&\frac{\tau}{t}u_{\lambda}(x,t)\le u_{\lambda}(x,\tau)
\le\frac{\tau}{t-1}u_{\lambda}(x,t-1)
\qquad\qquad\qquad\qquad\qquad\forall x\in\R^n,
t-1\le\tau\le t, t\ge 2\notag\\
\Rightarrow\quad&\log\left(\frac{t-1}{t}\,u_{\lambda}(x,t)\right)
\le\int_{t-1}^t\log u_{\lambda}(x,\tau)\,d\tau \le\log\left(\frac{t}{t-1}
\,u_{\lambda}(x,t-1)\right)\quad\forall x\in\R^n,t\ge 2\notag\\
\Rightarrow\quad&\log\left(\frac{u_{\lambda}(x,t)}{2}\right)
\le\int_{t-1}^t\log u_{\lambda}(x,\tau)\,d\tau
\le\log(2u_{\lambda}(x,t-1))\qquad\qquad\qquad\forall x\in\R^n,t\ge 2.
\end{align}
By \eqref{phi-u-lambda-ineqn} and \eqref{u-lambda0-different-time-compare},
\begin{equation}\label{u-lambda-phi-ineqn}
\left\{\begin{aligned}
&\frac{u_{\lambda}(x,t)}{2}\le C_4\phi_{\lambda_0}(x,t)
\qquad\quad\,\,\,\,\,\forall |x|\ge 1,t\ge 2\\
&\frac{\phi_{\lambda_0}(x,t-1)}{C_4}\le 2u_{\lambda}(x,t-1)
\quad\forall |x|\ge 1,t\ge 2.
\end{aligned}\right.
\end{equation}
Letting $\lambda\to 0$ in \eqref{u-lambda-phi-ineqn},
\begin{align}\label{u-tilde-u-l-bd3}
&\frac{1}{2C_4}\phi_{\lambda_0}(x,t)\le u(x,t)
\le 2C_4\phi_{\lambda_0}(x,t)\quad\forall\,|x|\ge 1, t\ge 2\notag\\
\Rightarrow\quad&\frac{1}{2C_4}\psi_{\lambda_0}(y)\le\4{u}(y,t) 
\le 2C_4\psi_{\lambda_0}(y) \quad\forall\,|y|\ge 1, t\ge 2.
\end{align}
By \eqref{u-tilde-u-l-bd3} and \eqref{psi-decay-infty}, we get 
\eqref{u-tilde-u-l-bd0} for some constants $C_1>0$, $C_2>0$.

When $u_0(x)$ is not radially symmetric and satisfies \eqref{u0-w-cond2}, 
by the above result for the radially symmetric initial data case and an 
argument similar to the last step of the proof of Proposition 6.2 of 
\cite{DS1} on p.118 of \cite{DS1} we get \eqref{u-tilde-u-l-bd0} for some 
constants $C_1>0$, $C_2>0$, and the lemma follows.
\end{proof}

\begin{cor}\label{tilde-u-lower-bd-cor}
Let $n$, $u_0$, $u$, $\4{u}$, be as in 
Lemma \ref{tilde-u-bounded-above-below}. Then there exists a constant
$C_4>0$ such that
\begin{equation}\label{tilde-u-lower-upper-bd}
\4{u}(x,t)\ge C_4\frac{\max(1,\log |x|)}{1+|x|^2}
\quad\forall x\in\R^n,t\ge 2.
\end{equation}
\end{cor}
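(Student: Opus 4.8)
The plan is to treat the far field $|x|\ge 3$ and the near field $|x|\le 3$ separately. For $|x|\ge 3$ one has $\log|x|>1$, hence $\max(1,\log|x|)=\log|x|$, and the asserted bound is \emph{exactly} the lower bound of Lemma \ref{tilde-u-bounded-above-below}; thus choosing $C_4$ no larger than $C_1e^{-C_3\|f\|_{L^1(\R^n)}}$ already makes \eqref{tilde-u-lower-upper-bd} hold for all $|x|\ge 3$, $t\ge 2$. The whole problem therefore reduces to producing a \emph{uniform positive} lower bound for $\4u$ on the compact ball $B_3$: since one checks directly that $\frac{\max(1,\log|x|)}{1+|x|^2}\le 1$ for $|x|\le 3$, once we know $\4u(x,t)\ge\delta$ on $\2{B_3}\times[2,\infty)$ for some $\delta>0$, the corollary follows with $C_4=\min\{C_1e^{-C_3\|f\|_{L^1(\R^n)}},\delta\}$.

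First I would record the boundary and initial data on $B_3$. Evaluating the lower bound of Lemma \ref{tilde-u-bounded-above-below} on the sphere $|x|=3$ gives, uniformly in $t$,
\begin{equation*}
\4u(x,t)\ge \frac{C_1\log 3}{10}\,e^{-C_3\|f\|_{L^1(\R^n)}}=:c_0>0\qquad\forall\,|x|=3,\ t\ge 2 .
\end{equation*}
Moreover $\4u(\cdot,2)$ is continuous and strictly positive on the compact set $\2{B_3}$, so $\4u(x,2)\ge c_1>0$ there for some $c_1>0$. Put $\delta=\min\{c_0,c_1\}>0$.

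The key step is a minimum-principle argument for the rescaled equation \eqref{u-tilde-eqn} that exploits the strict positivity of the zeroth order term $2\beta\4u$. I claim $\4u\ge\delta$ on $\2{B_3}\times[2,\infty)$. Fix $T>2$ and suppose, for contradiction, that $\4u-\delta$ attains a negative minimum over $\2{B_3}\times[2,T]$. Since $\4u\ge\delta$ on the parabolic boundary (on $\partial B_3\times[2,T]$ by the displayed bound, and on $\2{B_3}\times\{2\}$ by the choice of $c_1$), this minimum is attained at an interior point $(x_0,t_0)$ with $x_0\in B_3$ and $t_0\in(2,T]$, where $\4u(x_0,t_0)<\delta$. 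At such a point $\nabla\4u(x_0,t_0)=0$, $\Delta\4u(x_0,t_0)\ge 0$ and $\4u_t(x_0,t_0)\le 0$. Substituting into \eqref{u-tilde-eqn} and using $\Delta\log\4u=\4u^{-1}\Delta\4u-\4u^{-2}|\nabla\4u|^2$ (so that the convection term $\beta x\cdot\nabla\4u$ vanishes as well) gives
\begin{equation*}
\4u_t(x_0,t_0)=\frac{\Delta\4u(x_0,t_0)}{\4u(x_0,t_0)}+2\beta\4u(x_0,t_0)\ge 2\beta\4u(x_0,t_0)>0,
\end{equation*}
which contradicts $\4u_t(x_0,t_0)\le 0$. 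Hence $\4u\ge\delta$ on $\2{B_3}\times[2,T]$, and since $\delta$ is independent of $T$ and $T>2$ is arbitrary, $\4u\ge\delta$ on $\2{B_3}\times[2,\infty)$.

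I expect the only delicate point to be the justification that the negative minimum genuinely occurs at an interior spatial point with $t_0>2$, where $\nabla\4u=0$, $\Delta\4u\ge 0$ and $\4u_t\le 0$; this is precisely where the strict positivity and $C^{2,1}$-regularity of the classical solution $\4u$ (inherited from $u$) are used, together with the sign of the reaction term to close the contradiction. With $\4u\ge\delta$ on $B_3$ established, combining it with the far-field estimate and the elementary inequality $\frac{\max(1,\log|x|)}{1+|x|^2}\le 1$ on $B_3$ yields \eqref{tilde-u-lower-upper-bd} with $C_4=\min\{C_1e^{-C_3\|f\|_{L^1(\R^n)}},\delta\}$, completing the proof.
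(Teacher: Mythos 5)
Your proof is correct and follows essentially the same route as the paper: the far-field bound for $|x|\ge 3$ is read off directly from Lemma \ref{tilde-u-bounded-above-below}, and the uniform positive lower bound on $\2{B_3}\times[2,\infty)$ is obtained by the maximum (minimum) principle for \eqref{u-tilde-eqn}, using the lemma's bound on the sphere $|x|=3$ together with $\min_{|x|\le 3}\4{u}(x,2)>0$. The only difference is that you write out explicitly the minimum-principle computation (vanishing gradient, sign of $\Delta\log\4{u}$ and of the zeroth-order term $2\beta\4{u}$) that the paper invokes in a single sentence.
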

\begin{proof}
By Lemma \ref{tilde-u-bounded-above-below} there exist constants $C_1>0$, 
$C_2>0$, $C_3>0$, such that \eqref{u-tilde-u-l-bd0} holds. Let
$$
C_4'=\min\left((C_1/10)(\log 3)e^{-C_3\|f\|_{L^1(\R^n)}},\min_{|x|\le 3}\4{u}(x,2)
\right).
$$
Since $\4{u}$ satisfies \eqref{u-tilde-eqn}, by applying the maximal principle
to $\4{u}$ in $B_3\times (2,\infty)$ we get
\begin{equation}\label{u-tilde-l-bd1}
\4{u}(x,t)\ge C_4'\quad\forall |x|\le 3,t\ge 2.
\end{equation}
By \eqref{u-tilde-u-l-bd0} and \eqref{u-tilde-l-bd1} there exists a constant
$C_4>0$ such that \eqref{tilde-u-lower-upper-bd} holds.
\end{proof}

We are now ready for the proof of Theorem \ref{main-thm2}.

\noindent{\it Proof of Theorem \ref{main-thm2}}: 
By \eqref{u0-w-cond1} and the maximal principle,
\begin{align}
&0\le u(x,t)\le\phi_{\lambda_1}(x,t)\quad\forall x\in\R^n,t>0\notag\\
\Rightarrow\quad&0\le\4{u}(x,t)\le\psi_{\lambda_1}(x)\quad\forall x\in\R^n,t>0.
\label{u-tilde-bd-1}
\end{align}
By Corollary \ref{tilde-u-lower-bd-cor} and \eqref{u-tilde-bd-1} for any
$R>0$, there exist constants $C_5>0$, $C_6>0$, such that
\begin{equation}\label{u-tilde-uniformly-bd}
C_5\le\4{u}(x,t)\le C_6\quad\forall |x|\le R,t\ge 2.
\end{equation}
By \eqref{u-tilde-uniformly-bd} the equation \eqref{u-tilde-eqn} is uniformly 
parabolic on $B_R\times [2,\infty)$ for any $R>0$. Let $\{t_i\}_{i=1}^{\infty}$ 
be a sequence such that $t_i\ge 3$ for all $i\in\Z^+$ and $t_i\to\infty$ 
as $i\to\infty$. By the Schauder estimates for parabolic equation \cite{LSU} 
the sequence 
$\4{u}(x,t_i)$ is equi-H\"older continuous in $C^2$ on every compact subsets 
of $\R^n$. Then by the Arzela-Ascoli theorem and a diagonalization argument
the sequence $\{\4{u}(x,t_i)\}_{i=1}^{\infty}$ has a convergent 
subsequence which we may assume without loss of generality to be the 
sequence itself that converges uniformly in $C^2$ on every compact subsets 
of $\R^n$ to some $C^2$ function $w$ of as $i\to\infty$. On the other hand by 
Theorem \ref{weak-convergence-thm} $\4{u}(x,t)$ satisfies 
\eqref{u-tilde-psi-L1-decay}. Hence $\4{u}(x,t)$ converges to $\psi_{\lambda_0}$
in $L^1(\R^n)$ as $t\to\infty$ and $w=\psi_{\lambda_0}$. Thus $\4{u}(x,t_i)$
converges uniformly in $C^2$ on every compact subsets 
of $\R^n$ to $\psi_{\lambda_0}$ as $i\to\infty$. Since the 
sequence is arbitrary, $\4{u}(x,t)$ converges uniformly in $C^2$ on every 
compact subsets of $\R^n$ to $\psi_{\lambda_0}$ as $t\to\infty$ and the 
theorem follows.
\hfill$\square$\vspace{6pt}

\end{document}